\newcommand{\cE}{{\mathcal{E}}}
\newcommand{\cT}{{\mathcal{T}}}
\newcommand{\cN}{{\mathcal{N}}}
\newcommand{\tK}{{\tilde {K}}}
\newcommand{\tW}{{\tilde {W}}}
\newcommand{\tH}{{\tilde {H}}}
\newcommand{\bfn}{{\bf n}}
\newcommand{\commentout}[1]{{}}
\newcommand{\abs}[1]{\left|#1\right|}
\newcommand{\norm}[1]{\left\|#1\right\|}
\newcommand{\Frac}[2]{\frac{\textstyle #1}{\textstyle #2}}
\newcommand{\oderiv}[2]{\Frac{d \textstyle #1}{d \textstyle #2 }}
\newcommand{\oderivm}[3]{\Frac{d^{#3}\textstyle #1 }{d \textstyle #2^{#3} } }
\newcommand{\pderiv}[2]{\Frac{\partial \textstyle #1}{\partial \textstyle #2 }}
\newtheorem{remark}{Remark}[section]
\title{Partially Penalized Immersed Finite Element Methods
For Elliptic Interface Problems
\thanks{This work is partially supported by NSF grant DMS-1016313, GRF grant of Hong Kong (Project No. PolyU 501012)} }
\date{}
\author{
Tao Lin\thanks{Department of Mathematics, Virginia Tech, Blacksburg, VA 24061, ({\tt tlin@math.vt.edu}).}
\and
Yanping Lin\thanks{Department of Applied Mathematics, Hong Kong
polytechnic University, Hung Hom, Hong Kong, and Department of
Mathematical and Statistics Science, University of Alberta, Edmonton
AB, T6G 2G1, Canada, ({\tt  yanlin@ualberta.ca}).}
\and
Xu Zhang\thanks{Department of Mathematics,
Virginia Tech, Blacksburg, VA 24061, ({\tt xuz@vt.edu}).}
}
\begin{document}
\maketitle

\begin{abstract}
This article presents new immersed finite element (IFE) methods for solving the popular second order elliptic interface problems on structured Cartesian meshes even if the involved interfaces have nontrivial geometries. These IFE methods contain extra stabilization terms introduced only at interface edges for penalizing the discontinuity in IFE functions. With the enhanced stability due to the added penalty, not only these IFE methods can be proven to have the optimal convergence rate
in an energy norm provided that the exact solution has sufficient regularity, but also numerical results indicate that their convergence rates in both the $H^1$-norm and the $L^2$-norm do not deteriorate when the mesh becomes finer which is a shortcoming of the classic IFE methods in some situations. Trace inequalities are established for both linear and bilinear IFE functions that are not only critical for the error analysis of these new IFE methods, but also are of a great potential to be useful in error analysis for other related IFE methods.
\end{abstract}

\begin{keywords}
Interface Problems, Immersed Finite Element, Optimal Convergence.
\end{keywords}

\begin{AMS}
35R05, 65N15, 65N30
\end{AMS}

\section{Introduction}

Without loss of generality, we consider a domain $\Omega$ that is a union of rectangular domains in $\mathbb{R}^2$, and we assume that $\Omega$ is formed by two different materials separated by a curve $\Gamma$. In particular, this means $\Gamma$ separates $\Omega$ into two sub-domains $\Omega^-$ and $\Omega^+$ such that $\overline{\Omega}= \overline{\Omega^- \cup \Omega^+ \cup \Gamma}$. Consequently, the diffusion coefficient $\beta$ on $\Omega$ is assumed to be a piecewise constant function:
\begin{eqnarray*}
\beta(x,y) = \left\{ \begin{array} {ll}
\beta^-, ~~(x,y) \in \Omega^-, \\
\beta^+, ~~(x,y) \in \Omega^+,
\end{array} \right.
\end{eqnarray*}
such that $\min\{\beta^-, \beta^+\} > 0$.

The main purpose of this article is to present a group of partially penalized immersed finite element (IFE) methods using Cartesian meshes to solve popular elliptic interface problems appearing in many applications in the following form:
\begin{eqnarray}
- \nabla \cdot \big( \beta \nabla u(x,y) \big) &=& f(x,y),~~(x,y) \in \Omega^- \cup \Omega^+,
\label{bvp_pde} \\
u(x,y)&=&0,~~(x,y) \in \partial \Omega, \label{eq:bvp_bc}
\end{eqnarray}
together with the jump conditions on the interface $\Gamma \subset \Omega$:
\begin{eqnarray}
\left[u \right]|_\Gamma &=& 0, \label{eq:bvp_int_1} \\
\left[\beta \pderiv{u}{n}\right]|_{\Gamma} &=& 0. \label{eq:bvp_int_2}
\end{eqnarray}
The homogeneous boundary condition \eqref{eq:bvp_bc} is discussed here for simplicity's sake, the method and related analysis can be readily extended to interface problems with a non-homogeneous boundary condition.

A large number of numerical methods based on Cartesian meshes have been introduced for elliptic interface problems. Since Peskin's pioneering work of the immersed boundary method \cite{CPeskin_Blood_Flow}, a variety of methods have been developed in finite difference formulation, such as the immersed interface method \cite{RLeveque_ZLLi_IIM}, the matched interface and boundary method \cite{YZhou_SZhao_MFeig_GWei_MIB_Elliptic}, and the ghost fluid method \cite{Osher_Ghost_Fluid_1999}. We refer to the book \cite{ZLi_KIto_IIM} for an overview of different numerical methods in finite difference framework.

In finite element formulation, certain types of modifications need to be executed for elements around the interface. One way is to modify the weak formulation of finite element equations near the interface. We refer to some representative methods such as the penalty finite element method
\cite{Babuska_Elliptic_Discontinuous,
JBarrett_CElliott_Fitted_Unfitted},
the unfitted finite element method
\cite{Hansbo_Hansbo_FEM_Elliptic},
the discontinuous Galerkin formulation methods
\cite{PBastin_CEngwer_Unfitted_DG, GGuyomarch_CLee_KJeon_DG_Elliptic}.
An alternative approach is to modify the approximating functions around the interface, for instance, the general finite element method \cite{Babuska_Banerjee_Osborn_Survey_GFEM, IBabuska_CCaloz_JOsborn_FEM_Elliptic_Rough_Cofficients}, the multi-scale finite element method
\cite{CChu_IGraham_THou_Multiscale_FE_Elliptic_Interface, Efendiev_Hou_Multiscale_FEM}, the extended finite element method \cite{Dolbow_Moes_Belytschko_XFEM}, the partition of unity method  \cite{Babuska_Melenk_PU_FEM, Babuska_Zhang_Partition_of_Unity}, to name just a few.

Immersed finite element (IFE) methods are a particular class of finite element (FE) methods belonging to the second approach mentioned above, and they can solve interface problems with meshes independent of the interface
\cite{SAdjerid_TLin_1D_IDG,
SAdjerid_TLin_1D_IFE,
XHe_Thesis_Bilinear_IFE,
XHe_TLin_YLin_Bilinear_Approximation,
XHe_TLin_YLin_XZhang_Moving_CNIFE,
RKafafy_TLin_YLin_JWang_3D_IFE_Electric,
ZLi_IIM_FE,
ZLi_TLin_XWu_Linear_IFE,
TLin_YLin_WSun_ZWang_IFE_4TH,
TLin_XZhang_Elasticity,
SSauter_RWarnke_Composite_FE,
SVallaghe_TPapadopoulo_TriLinear_IFE}. If desired, an IFE method can use a Cartesian mesh to solve a boundary value problem (BVP) whose coefficient is discontinuous
across a curve $\Gamma$ with a non-trivial geometry. The basic idea of an IFE method is to employ standard FE functions in non-interface elements not intersecting with the interface $\Gamma$, but on each interface element, it uses IFE functions constructed with piecewise polynomials based on the natural partition of this element formed by the interface and the jump conditions required by the interface problem. The IFE functions are macro-elements \cite{DBraess_Finite_Element,Clough_Tocher_FE}, and each IFE function partially solves the related interface problem because it satisfies the interface jump conditions in a certain sense. Also, the IFE space on an interface element is consistent with
the corresponding FE space based on the same polynomial space in the sense that the IFE space becomes the FE space if the discontinuity in the coefficient
$\beta$ disappears in that element, see \cite{XHe_Thesis_Bilinear_IFE, XHe_TLin_YLin_Bilinear_Approximation} for more details.

IFE methods have been developed for solving interface problems involving several important types of partial differential equations, such as the second order elliptic equation
\cite{SAdjerid_TLin_1D_IDG,
SAdjerid_TLin_1D_IFE,
YGong_BLi_ZLi_Nonhomo_IFE,
XHe_Thesis_Bilinear_IFE,
XHe_TLin_YLin_Bilinear_Approximation,
RKafafy_TLin_YLin_JWang_3D_IFE_Electric,
Kwak_Wee_Chang_Broken_P1_IFE,
ZLi_IIM_FE,
ZLi_TLin_XWu_Linear_IFE,
TLin_YLin_RRogers_MRyan_Rectangle,
SSauter_RWarnke_Composite_FE,
SVallaghe_TPapadopoulo_TriLinear_IFE,
Wu_Li_Lai_Adaptive_IFE,
XZhang_PHDThesis}, the bi-harmonic and beam equations \cite{TLin_YLin_WSun_ZWang_IFE_4TH}, the planar elasticity system
\cite{YGong_ZLLi_Elas_IFE,
ZLLi_XZYang_IFE_Elasticity,
TLin_DSheen_XZhang_RQ1_IFE_Elasiticity,
TLin_XZhang_Elasticity},
the parabolic equation with fixed interfaces
\cite{Attanayake_Senaratne_Convergence_IFE_Parabolic,
TLin_DSheen_IFE_Laplace,
Wang_Wang_Yu_Immersed_EL_Interfaces}, and the parabolic equation with a moving interface
\cite{XHe_TLin_YLin_XZhang_Moving_CNIFE,
TLin_YLin_XZhang_MoL_Nonhomo,
TLin_YLin_XZhang_IFE_MoL}.
When jump conditions are suitably employed in the construction of IFE functions for an interface problem, the resulting IFE space usually has the optimal approximation capability from the point view of polynomials used in this IFE space
\cite{SAdjerid_TLin_1D_IDG,
BCamp_TLin_YLin_WSun_Quadratic_IFE,
BCamp_Thesis,
XHe_TLin_YLin_Bilinear_Approximation,
ZLi_TLin_YLin_RRogers_linear_IFE,
MBenRomdhane_Thesis_Quadratic_IFE,
XZhang_PHDThesis}. Numerical examples
\cite{SAdjerid_TLin_1D_IFE,
ZLi_IIM_FE,
ZLi_TLin_YLin_RRogers_linear_IFE,
ZLi_TLin_XWu_Linear_IFE} demonstrate that methods based on IFE spaces can converge optimally for second order elliptic interface problems. However, the proof for their optimal error bounds is {\em still elusive} except for the one dimensional case
\cite{SAdjerid_TLin_1D_IFE}, even though there have been a few attempts \cite{Chou_Kwak_Wee_IFE_Triangle_Analysis,XHe_TLin_YLin_Convergence_IFE,Kwak_Wee_Chang_Broken_P1_IFE,Wang_Chen_IFE_Analysis}. For two dimensional elliptic interface problems, only a {\em suboptimal} convergence in the $H^1$-norm has been rigorously proven \cite{XHe_TLin_YLin_Convergence_IFE}.

One of the major obstacles is the error estimation on edges between two interface elements where IFE functions have discontinuity.
Certain types of trace inequalities are needed and can be established, but it is not clear whether the generic constant factor in these inequalities is actually independent of the interface location. The scaling argument in the standard finite element error estimation is not applicable here because the local IFE spaces on two different interface elements are not affine equivalent in general. Besides, numerical experiments have demonstrated that the classic IFE methods in the literature
often have a much lager point-wise error over interface elements which, we believe, is caused by the inter-element discontinuity of IFE functions. In some cases, the convergence rates can even deteriorate when the mesh becomes finer. These observations motivate us to apply a certain penalty over interface edges for controlling  negative impacts from this discontinuity. Natural candidates are those well known penalty strategies for handling inter-element discontinuity in interior penalty Galerkin methods and discontinuous Galerkin methods
\cite{Babuska_penalty,
Babuska_Zlamal_Nonconform_FEM_Penalty,
Brezzi_Cockburn_Marini_Suli_DG,
Douglas_Dupont_Penalty_Elliptic_Parabolic,
OdenBabuskaBaumann_DG_hp,
RiviereWheelerGiraut_DG,
RustenVassilevskiWinther_interior_penalty,
M.F.Wheeler_colloc_interior_penalty}. These considerations lead to the partially penalized IFE methods in this article. Theoretically, thanks to the enhanced stability by the penalty terms, we are able to prove that these new IFE methods do converge optimally in an energy norm. In addition, we have observed through abundant numerical experiments that these partially penalized IFE methods maintain their expected convergence rate in both $H^1$-norm and $L^2$-norm when their mesh becomes finer and finer while the classic IFE methods cannot maintain in some situations.

The partial penalty idea has also been used in the unfitted finite element method \cite{Hansbo_Hansbo_FEM_Elliptic}.
In this method, penalty terms are introduced on interface instead of interface edges because approximating functions are allowed to be discontinuous inside interface elements but they are continuous on element boundaries within each subdomain. IFE methods reverse this idea by imposing continuity of approximating functions inside each element but allowing discontinuity possibly only across interface edges. In addition, on the same mesh, the unfitted finite element method has a slightly larger number of degrees of freedom than IFE methods. On the other hand, the unfitted finite element method has been proven to have the optimal convergence rate under the usual piecewise $H^2$ regularity
\cite{Hansbo_Hansbo_FEM_Elliptic} while the analysis in the represent article needs to assume a piecewise $H^3$ or $W^{2,\infty}$ regular
in order to establish the optimal convergence for the partially penalized IFE methods.

Also, we note that these partially penalized IFE methods and their related error analysis can be readily modified to obtain IFE methods based on the discontinuous Galerkin formulation with advantages such as adaptivity even with Cartesian meshes. However, on the same mesh, the DG IFE methods generally have far more global degrees of freedom. For instance, on a Cartesian triangular mesh, a DG IFE method has about 6 times more unknowns than the classic IFE method. The partially penalized IFE methods presented here have the same global degrees of freedom as their classic counterparts; hence they can be more competitive in applications where advantages of DG IFE methods are not needed.

%

The rest of this article is organized as follows. In Section 2, we derive partially penalized IFE methods based on either linear or bilinear IFE functions for the interface problem. In Section 3, we show that the well-known trace inequalities on an element are also valid for linear and bilinear IFE functions even though they are not $H^2$ functions locally in an interface element. In Section 4, we show that these IFE schemes do have the optimal convergence rate in an energy norm.
In Section 5, we will present numerical examples to demonstrate features of these IFE methods.

\section{Partially penalized IFE methods}

Let $\cT_h$, $0 < h < 1$, be a family of Cartesian triangular or rectangular meshes on $\Omega$. For each mesh $\cT_h$, we let $\cN_h$ be the set of vertices of its elements, and let $\cE_h$ be the set of its edges and ${\mathring \cE}_h$ be the set of interior edges.
In addition, we let $\cT_h^i$ be the set of interface elements of $\cT_h$ and $\cT_h^n$ be the set of non-interface elements. Similarly, we let ${\mathring \cE}_h^i$ be the set of interior interface edges and let ${\mathring \cE}_h^n$ be the set of interior non-interface edges. For every interior edge $B \in {\mathring \cE}_h$, we denote
two elements that share the common edge $B$ by $T_{B,1}$ and $T_{B,2}$. For a function $u$ defined on $T_{B,1}\cup T_{B,2}$, we denote its average and
jump on $B$ by
\begin{align*}
\{u\}_B = \frac{1}{2}\big((u|_{T_{B,1}})|_{B} +  (u|_{T_{B,2}})|_{B}\big),~~[u]_B = (u|_{T_{B,1}})|_{B} -  (u|_{T_{B,2}})|_{B}.
\end{align*}
For simplicity's sake, we will often drop the subscript $B$ from these notations if there is no danger to cause any confusions. We will also use the following function spaces:
\begin{equation*}
  \tW^{r, p}(\Omega) = \{v \in W^{1,p}(\Omega)~|~u|_{\Omega^s} \in W^{r,p}(\Omega^s),~s = + \text{~or~} -\}
\text{~~for $r \geq 1$ and $1 \leq p \leq \infty$},
\end{equation*}
equipped the norm
\begin{equation*}
  \norm{v}_{\tW^{r,p}(\Omega)}^p = \norm{v}_{W^{r,p}(\Omega^-)}^p + \norm{v}_{W^{r,p}(\Omega^+)}^p,~~\forall v \in \tW^{r, p}(\Omega).
\end{equation*}
As usual, for $p=2$, we use $\tH^r(\Omega)= \tW^{r,2}(\Omega)$ and denote its corresponding norm by
\begin{equation*}
  \norm{v}_{r}^2 = \norm{v}_{\tH^r(\Omega)}^2 = \norm{v}_{H^r(\Omega^-)}^2 + \norm{v}_{H^r(\Omega^+)}^2,~~\forall v \in \tH^r(\Omega).
\end{equation*}

With a suitable assumption about the regularity of $\Gamma$ and $f$ (e.g. \cite{Babuska_Elliptic_Discontinuous}), we can assume that the exact solution $u$ to the interface problem is in $\tH^2(\Omega)$.
To derive a weak form of interface problem described by \eqref{bvp_pde}-\eqref{eq:bvp_int_2} for an IFE method, we will use the following space:
\begin{equation*}
  V_h = \{v ~|~v \text{~satisfies conditions (HV1)-(HV4) described as follows}\}
\end{equation*}
\begin{description}
  \item[(HV1)] $v|_K \in H^1(K),~\forall K \in \cT_h$.
  \item[(HV2)] $v$ is continuous at every $X \in \cN_h$.
  \item[(HV3)] $v$ is continuous across each $B \in {\mathring \cE}_h^n$.
  \item[(HV4)] $v|_{\partial \Omega} = 0$.
\end{description}

We multiply equation \eqref{bvp_pde} by a test function $v \in V_h$, integrate both sides on each element $K \in \cT_h$, and apply Green's formula to have
\begin{equation*}
  \int_K \beta \nabla v \cdot \nabla u dX - \int_{\partial K} \beta \nabla u \cdot \bfn vds = \int_K vf dX.
\end{equation*}
Summarizing over all elements leads to
\begin{equation}\label{eq:weak_1}
  \sum_{K \in \cT_h} \int_K \beta \nabla v \cdot \nabla u dX - \sum_{B \in {\mathring \cE}_h^i} \int_B \left\{\beta \nabla u \cdot \bfn_B\right\}[v] ds = \int_\Omega v f dX.
\end{equation}
Here we have used the fact that
\begin{equation*}
\left\{\beta \nabla u \cdot \bfn_B\right\}_B = (\beta \nabla u \cdot \bfn_B)|_B, ~~\forall B \in {\mathring \cE}_h^n.
\end{equation*}
Because of the regularity of $u$, for arbitrary parameters $\epsilon, \alpha > 0$, and $\sigma_B^0\geq 0$, we have
\begin{equation}\label{eq:weak_2}
\epsilon \sum_{B \in {\mathring \cE}_h^i}\int_B \left\{\beta \nabla v \cdot \bfn_B\right\} [u] ds = 0,
~\sum_{B \in {\mathring \cE}_h^i}\int_B \frac{\sigma_B^0}{\abs{B}^\alpha} [v][u] ds = 0.
\end{equation}
Therefore, adding \eqref{eq:weak_2} to \eqref{eq:weak_1} leads to the following weak form of the interface problem \eqref{bvp_pde}-\eqref{eq:bvp_int_2}:

\begin{eqnarray}
   \sum_{K \in \cT_h} \int_K \beta \nabla v \cdot \nabla u dX - \sum_{B \in {\mathring \cE}_h^i} \int_B \left\{\beta \nabla u \cdot \bfn_B\right\}[v] ds \label{eq:weak_form} &&  \\
   +\epsilon \sum_{B \in {\mathring \cE}_h^i}\int_B \left\{\beta \nabla v \cdot \bfn_B\right\} [u] ds
   + \sum_{B \in {\mathring \cE}_h^i}\int_B \frac{\sigma_B^0}{\abs{B}^\alpha} [v][u] ds &=& \int_\Omega v f dX,~~\forall v \in V_h.\nonumber
    \end{eqnarray}
We now recall the linear and bilinear IFE spaces to be used in our partially penalized IFE methods based on the weak form \eqref{eq:weak_form}. On each element $K \in \cT_h$, we let
\begin{eqnarray*}
S_h(K) = span\{\phi_j(X), 1 \leq j \leq d_K\}, ~~d_K = \begin{cases}
3,&\text{if $K$ is a triangular element}, \\
4,&\text{if $K$ is a rectangular element},
\end{cases}
\end{eqnarray*}
where $\phi_j, 1 \leq j \leq d_K$ are the standard linear or bilinear nodal basis functions for $K \in \cT_h^n$; otherwise, for $K \in \cT_h^i$,
$\phi_j, 1 \leq j \leq d_K$ are the linear or bilinear IFE basis functions discussed in \cite{ZLi_TLin_YLin_RRogers_linear_IFE,ZLi_TLin_XWu_Linear_IFE} and \cite{XHe_TLin_YLin_Bilinear_Approximation,TLin_YLin_RRogers_MRyan_Rectangle}, respectively. Then, we define the IFE space over the whole solution domain $\Omega$ as follows:
\begin{equation*}
S_h(\Omega) = \{v ~|~\text{$v$ satisfies conditions (IFE1) - (IFE3) given below}\}
\end{equation*}
\begin{description}
  \item[(IFE1)] $v|_K \in S_h(K),~\forall K \in \cT_h$.
  \item[(IFE2)] $v$ is continuous at every $X \in \cN_h$.
  \item[(IFE3)] $v|_{\partial \Omega} = 0$.
\end{description}
%
%
%

It is easy to see that $S_h(\Omega) \subset V_h(\Omega)$. Now, we describe the partially penalized IFE methods for the interface problem \eqref{bvp_pde}-\eqref{eq:bvp_int_2}: find $u_h \in S_h(\Omega)$ such that
\begin{equation}\label{eq:IFE_eq}
a_h(v_h, u_h) = (v_h, f),~~\forall v_h \in S_h(\Omega),
\end{equation}
where the bilinear form $a_h(\cdot, \cdot)$ is defined on $S_h(\Omega)$ by
\begin{eqnarray}
 a_h(v_h, w_h) &=& \sum_{K \in \cT_h} \int_K \beta \nabla v_h \cdot \nabla w_h dX - \sum_{B \in {\mathring \cE}_h^i} \int_B \left\{\beta \nabla w_h \cdot \bfn_B\right\}[v_h] ds \nonumber \\
 &+& \epsilon \sum_{B \in {\mathring \cE}_h^i}\int_B \left\{\beta \nabla v_h \cdot \bfn_B\right\} [w_h] ds
+ \sum_{B \in {\mathring \cE}_h^i}\int_B \frac{\sigma_B^0}{\abs{B}^\alpha} [v_h][w_h] ds,~~\forall v_h, w_h \in S_h(\Omega).
\label{eq:IFE_BF}
\end{eqnarray}


\section{Trace inequalities for IFE functions}

Using the standard scaling argument, we can obtain the following well known trace inequalities \cite{Riviere_DG_book}: there exists a constant $C$ such that
\begin{eqnarray}
  \quad\norm{v}_{L^2(B)} &\leq& C \abs{B}^{1/2}\abs{K}^{-1/2}\left(\norm{v}_{L^2(K)} + h \norm{\nabla v}_{L^2(K)}\right),~\forall v \in H^1(K),  \label{eq:trace_inq_1}\\
  \quad\norm{\nabla v}_{L^2(B)} &\leq& C \abs{B}^{1/2}\abs{K}^{-1/2}\left(\norm{\nabla v}_{L^2(K)} + h \norm{\nabla^2 v}_{L^2(K)}\right),~\forall v \in H^2(K). \label{eq:trace_inq_2}
\end{eqnarray}
where $B$ is an edge of $K$.

Our goal in this section is to extend these trace inequalities to IFE functions in $S_h(K)$ for $K \in \cT_h^i$. First, we recall that $S_h(K) \subset C(K)\cap H^1(K)$ for all $K \in \cT_h$ \cite{XHe_TLin_YLin_Bilinear_Approximation, ZLi_TLin_YLin_RRogers_linear_IFE}. This implies that inequality \eqref{eq:trace_inq_1} is also valid for
$v \in S_h(K)$ even if $K \in \cT_h^i$. However, the second trace inequality \eqref{eq:trace_inq_2}
 cannot be applied to $v \in S_h(K)$ with $K \in \cT_h^i$ because $v \not \in H^2(K)$ in general.

\subsection{Trace inequalities for linear IFE functions}

It is relatively easier to prove that the trace inequality for a linear IFE function in a triangular interface element is true because its gradient is a piecewise constant function. Without loss of generality, we consider the following triangular interface element
\begin{equation*}
K = \bigtriangleup A_1A_2A_3, ~~~A_1 = (0,0),~A_2 = (h,0), ~A_3 = (0,h).
\end{equation*}
Assume that the interface $\Gamma$ intersects the edge of $K$ at points $D$ and $E$
and the straight line $\overline{DE}$ separates $K$ into $K^-$ and $K^+$, see the illustration on the left in Fig. \ref{fig:tri_rec_IFE_elements}. Consider a linear IFE function on $K$ in the following form
\begin{equation}
v(x,y) = \begin{cases}
v^-(x,y) = c_1^- + c_2^-x + c_3^-y,& \text{if~} (x, y) \in K^-, \\
v^+(x,y) = c_1^+ + c_2^+x + c_3^+y,& \text{if~} (x, y) \in K^+,
\end{cases} \label{eq:linear_IFE_c1c2c3_format}
\end{equation}
which satisfies the following jump conditions \cite{ZLi_TLin_YLin_RRogers_linear_IFE}:
\begin{align} \label{eq:linear_IFE_jump_cond_coef_one_piece_bnd_by_another}
v^-(D) = v^+(D), ~~v^-(E) = v^+(E), ~~\beta^- \pderiv{v^-}{\bfn_{\overline{DE}}} = \beta^+ \pderiv{v^+}{\bfn_{\overline{DE}}}.
\end{align}

\begin{figure}[hbt]
\centerline{
\hbox{\includegraphics[height=1.5in]{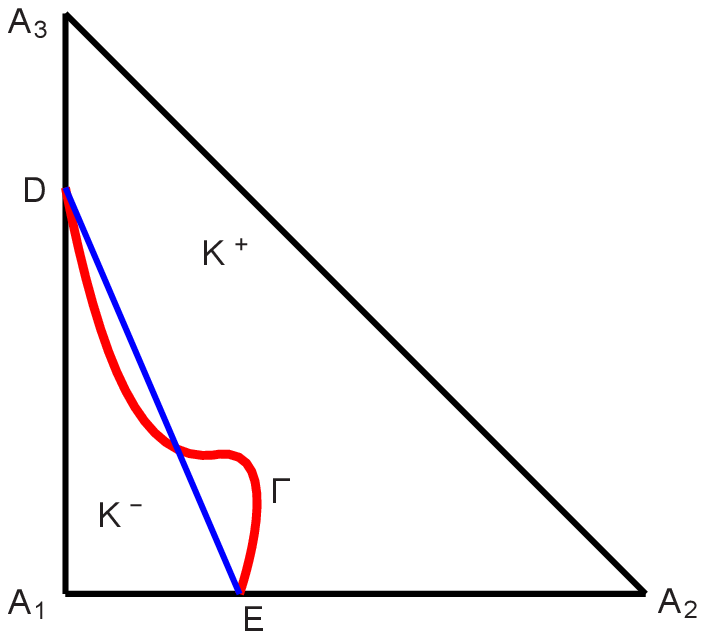}}~~~~
\hbox{\includegraphics[height=1.5in]{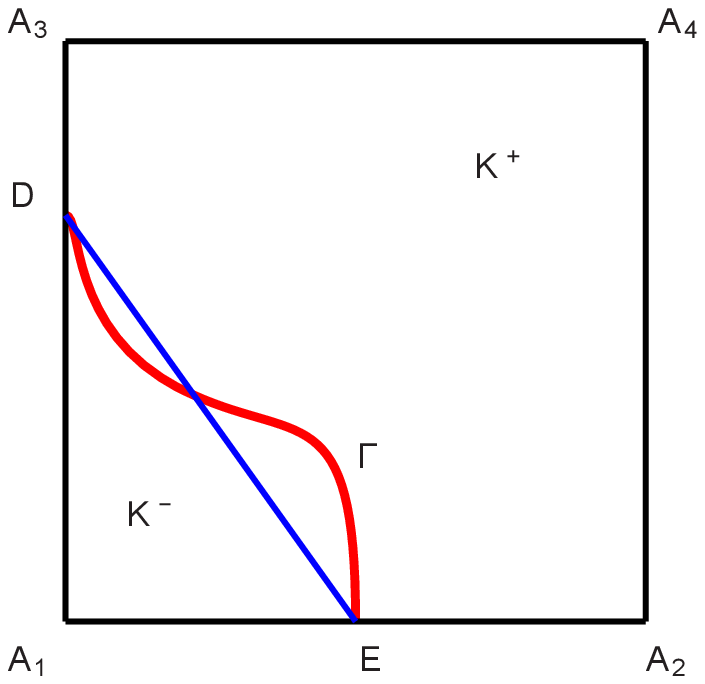}}
}
\caption{a triangular interface element (left) and a rectangular interface element (right)}
\label{fig:tri_rec_IFE_elements}
\centering
\end{figure}

\begin{lemma}\label{lem:linear_IFE_coef_one_piece_bnd_by_another}
There exists a constant $C>1$ independent of the interface location such that for every linear IFE function $v$ on the interface element $K$ defined in \eqref{eq:linear_IFE_c1c2c3_format} the following inequalities hold:
\begin{equation}\label{eq:linear_IFE_coef_one_piece_bnd_by_another}
  \frac{1}{C} \norm{(c_1^+, c_2^+, c_3^+)}\leq \norm{(c_1^-, c_2^-, c_3^-)} \leq C \norm{(c_1^+, c_2^+, c_3^+)}.
\end{equation}
\end{lemma}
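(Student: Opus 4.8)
The plan is to express the jump conditions \eqref{eq:linear_IFE_jump_cond_coef_one_piece_bnd_by_another} as a linear system relating the coefficient vector $\mathbf c^- = (c_1^-,c_2^-,c_3^-)$ to $\mathbf c^+ = (c_1^+,c_2^+,c_3^+)$, and then show the transition matrix and its inverse have norms bounded independently of where $D$ and $E$ sit on $\partial K$. The three conditions in \eqref{eq:linear_IFE_jump_cond_coef_one_piece_bnd_by_another} are linear and homogeneous in the six coefficients, so they read $M^-\mathbf c^- = M^+\mathbf c^+$ for two $3\times 3$ matrices whose entries depend on the coordinates of $D$, $E$ and on $\beta^\pm$ through the normal-derivative condition. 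First I would write these matrices explicitly: the two point-matching conditions contribute rows $(1, x_D, y_D)$ and $(1, x_E, y_E)$ to both $M^-$ and $M^+$ (since $v^-$ and $v^+$ agree at $D,E$), while the flux condition contributes the row $\beta^\pm(0, n_1, n_2)$ where $\mathbf n = (n_1,n_2)$ is the unit normal to $\overline{DE}$; note $M^-$ and $M^+$ differ only in this last row and only by the factor $\beta^-$ versus $\beta^+$.

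Next I would check that $M^+$ (equivalently $M^-$) is invertible with $\det M^\pm$ bounded away from zero uniformly. Geometrically, $\det$ of the point-matching part is (up to sign) twice the area of the triangle $A_?DE$-type configuration, but more to the point: the rows $(1,x_D,y_D)$, $(1,x_E,y_E)$, $(0,n_1,n_2)$ are linearly independent precisely because the normal direction $\mathbf n$ is not parallel to $\overrightarrow{DE}$ — indeed it is orthogonal to it — so $\det M^\pm = \pm\,\beta^\pm\,|DE|$ after a short computation (the $2\times 2$ minor from the last two columns in the first two rows equals $x_Dy_E - x_Ey_D$, and combining with the normal row gives a clean expression proportional to $|\overline{DE}|$). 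Since $D$ and $E$ lie on different edges of $K$ and $K$ has diameter $h$, the chord length $|\overline{DE}|$ is bounded below by $c\,h$ for a geometric constant $c$ — here I would invoke that $D$ and $E$ cannot both degenerate to the same vertex while lying on distinct edges, or I would parametrize the possible configurations in Fig.~\ref{fig:tri_rec_IFE_elements} — hence $|\det M^\pm| \geq c\,\min\{\beta^-,\beta^+\}\,h > 0$. The entries of $M^\pm$ are all $O(h)$ or $O(\max\{\beta^-,\beta^+\})$, so $\|M^\pm\| \leq C$ and $\|(M^\pm)^{-1}\| = \|\mathrm{adj}(M^\pm)\|/|\det M^\pm| \leq C$ with $C$ depending only on $h$, the ratio $\beta^+/\beta^-$, and geometric constants; a scaling by $h$ removes the $h$-dependence. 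Then $\mathbf c^- = (M^-)^{-1}M^+\mathbf c^+$ gives $\|\mathbf c^-\| \leq \|(M^-)^{-1}\|\,\|M^+\|\,\|\mathbf c^+\| \leq C\|\mathbf c^+\|$, and symmetrically $\|\mathbf c^+\| \leq C\|\mathbf c^-\|$, which is \eqref{eq:linear_IFE_coef_one_piece_bnd_by_another}.

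The main obstacle is the uniform lower bound $|\det M^\pm| \geq c\,h$: one must rule out the near-degenerate interface configurations where $D$ and $E$ are close together. This is where the hypothesis that $\overline{DE}$ genuinely separates $K$ into two nondegenerate pieces $K^-,K^+$ (so $D$ and $E$ lie on two \emph{different} edges of the triangle) does the work; I would make this quantitative by a case analysis over which pair of edges contains $D$ and $E$, checking in each case that the chord length is comparable to $h$, or equivalently that the angle subtended is bounded below. Everything else is a bounded linear-algebra computation, and the constant $C$ can be taken $>1$ without loss of generality as stated.
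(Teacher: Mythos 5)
Your setup is the same as the paper's (write the jump conditions as $M^-\mathbf c^- = M^+\mathbf c^+$ and bound the transition matrix), but there is a genuine gap at exactly the point you flag as "the main obstacle," and the fix you propose does not work. The claim that $\abs{\overline{DE}}\geq c\,h$ because $D$ and $E$ lie on different edges is false: the two edges containing $D=(0,dh)$ and $E=(eh,0)$ share the vertex $A_1=(0,0)$, and the interface is perfectly allowed to cut off an arbitrarily small corner there, i.e.\ $d,e\to 0$ simultaneously with $(d,e)\neq(0,0)$. In that regime $\abs{\overline{DE}}=h\sqrt{d^2+e^2}$ is not comparable to $h$, and the determinant, which a direct computation shows to be $\det M^{\pm}=-\beta^{\pm}(d^2+e^2)h^2$ (note: proportional to $\abs{\overline{DE}}^2$, not $\abs{\overline{DE}}$), tends to zero. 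So there is no uniform lower bound on $\abs{\det M^{\pm}}$, and no case analysis over pairs of edges will produce one.

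Worse, the submultiplicative strategy $\norm{\mathbf c^-}\leq\norm{(M^-)^{-1}}\,\norm{M^+}\,\norm{\mathbf c^+}$ cannot be repaired, because even after rescaling by $h$ the factor $\norm{(M^{\pm})^{-1}}$ genuinely blows up like $(d^2+e^2)^{-1/2}$ as the corner degenerates (e.g.\ the adjugate has entries of size $\beta^{\pm}eh$, which divided by $\beta^{\pm}(d^2+e^2)h^2$ are unbounded). The uniform bound holds only for the \emph{product} $(M^+)^{-1}M^-$: the paper computes its entries $f_{ij}$ explicitly as ratios $g_{ij}^{\pm}/(d^2+e^2)$ and observes that every numerator $g_{ij}^{\pm}$ (such as $d^2$, $e^2$, $de$, $d^2eh$) is bounded by a constant times the denominator $d^2+e^2$, using $\abs{de}\leq\frac12(d^2+e^2)$ and $0\leq d,e\leq 1$. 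This cancellation between the two matrices is the essential content of the lemma; you need to carry out that product computation rather than bound the two factors separately.
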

\begin{proof}
We prove the second inequality in \eqref{eq:linear_IFE_coef_one_piece_bnd_by_another}, and similar arguments
can be used to show the first one.  Applying the jump conditions
\eqref{eq:linear_IFE_jump_cond_coef_one_piece_bnd_by_another} we can show that coefficients of
$v(x,y)$ must satisfy the following equality:
\begin{align*}
M^- \begin{pmatrix}
c_1^- \\
c_2^- \\
c_3^-
\end{pmatrix} = M^+ \begin{pmatrix}
c_1^+ \\
c_2^+ \\
c_3^+
\end{pmatrix},
\end{align*}
where $M^s, ~s = -, +$ are two matrices.
Without loss of generality, we further assume
\begin{align*}
D = (0,dh), E = (eh, 0), \text{~with~} 0 \leq d \leq 1, 0 \leq e \leq 1.
\end{align*}
Then
\begin{eqnarray*}
M^s = \begin{pmatrix}
1&0&dh \\
1&eh&0 \\
0&-\beta^s dh & -\beta^s eh
\end{pmatrix},~~s = - \text{~or~} +
\end{eqnarray*}
whose determinant is
\begin{equation*}
\det(M^s) = -\beta^s(d^2+e^2)h^2,~~s = - \text{~or~} +
\end{equation*}
which is nonzero because $(d,e) \not = (0,0)$. Hence we can solve for $c_i^+$s in terms of $c_i^-$ to have
\begin{align}
&c_i^+ = f_{i1}c_1^- + f_{i2}c_2^- + f_{i3}c_3^-,~~i = 1, 2, 3,
\label{eq:linear_IFE_coef_one_piece_bnd_by_another_3} \\
&f_{ij} = \Frac{g_{ij}^-\beta^-}{\beta^+(d^2+e^2)} + \frac{g_{ij}^+\beta^+}{\beta^+(d^2+e^2)}, ~~1 \leq i, j \leq 3, \label{eq:linear_IFE_coef_one_piece_bnd_by_another_4}
\end{align}
with
\begin{align*}
&g_{11}^- = 0, g_{11}^+ = d^2+e^2, ~g_{12}^- = -d^2eh, g_{12}^+ = d^2eh, ~g_{13}^- = -de^2h, g_{13}^+ = de^2h, \\
&g_{21}^- = 0, g_{21}^+ = 0, ~g_{22}^- = d^2, g_{22}^+ = e^2, ~g_{23}^- = de, g_{23}^+ = -de, \\
&g_{31}^- = 0, g_{31}^+ = 0, ~g_{32}^- = de, g_{32}^+ = -de, ~g_{33}^- = e^2, g_{33}^+ = d^2.
\end{align*}
Therefore, there exists a constant $C$ that depends on $\beta^-$ and $\beta^+$, but is independent of $d, e$, such that
\begin{align*}
\abs{f_{ij}} \leq C, ~1 \leq i,j \leq 3.
\end{align*}
Then, the second inequality in \eqref{eq:linear_IFE_coef_one_piece_bnd_by_another} follows from
\eqref{eq:linear_IFE_coef_one_piece_bnd_by_another_3} and the above bounds for $\abs{f_{ij}},~1 \leq i,j \leq 3$.
\end{proof}

\begin{remark}\label{rem:linear_IFE_coef_one_piece_bnd_by_another}
In the proof of Lemma \ref{lem:linear_IFE_coef_one_piece_bnd_by_another}, we have shown
$f_{i1} = 0, i = 2, 3$. Consequently, we can show that there exists a constant $C>1$ such that
\begin{equation}\label{eq:linear_IFE_coef_one_piece_bnd_by_another_1_1}
  \frac{1}{C}\norm{(c_2^+, c_3^+)}\leq \norm{(c_2^-, c_3^-)} \leq C\norm{(c_2^+, c_3^+)}.
\end{equation}
\end{remark}
Now, we establish the trace inequality on a triangular interface element $K = \bigtriangleup A_1A_2A_3$.
\begin{lemma}\label{lem:linear_IFE_trace_ineq}
There exists a constant $C$ independent of the interface location such that for every linear IFE function $v$ on $K$ the following
inequalities hold
\begin{align}
&\norm{\beta v_p}_{L^2(B)} \leq C h^{1/2} \abs{K}^{-1/2}\norm{\sqrt{\beta} \nabla v}_{L^2(K)}, ~~p = x,y,  \label{eq:linear_IFE_trace_ineq_1}\\
&\norm{\beta \nabla v \cdot \bfn_B}_{L^2(B)} \leq C h^{1/2} \abs{K}^{-1/2}\norm{\sqrt{\beta} \nabla v}_{L^2(K)}. \label{eq:linear_IFE_trace_ineq_2}
\end{align}
\end{lemma}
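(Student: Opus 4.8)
The plan is to exploit the fact that a linear IFE function has a piecewise constant gradient. Writing $v$ in the form \eqref{eq:linear_IFE_c1c2c3_format}, we have $\nabla v|_{K^s} = (c_2^s, c_3^s)$ for $s = -,+$. On any edge $B$ of $K$, the interface point on $B$ (if there is one) splits $B$ into $B\cap K^-$ and $B\cap K^+$, and on each piece each of $\beta v_x$, $\beta v_y$, and $\beta\nabla v\cdot\bfn_B$ is a constant. Hence, estimating crudely $|B\cap K^s|\le|B|\le\sqrt2\,h$ and using $|\bfn_B|=1$ together with the Cauchy--Schwarz inequality for the normal-derivative term, I would first obtain
\[
\norm{\beta v_p}_{L^2(B)}^2 + \norm{\beta\nabla v\cdot\bfn_B}_{L^2(B)}^2 \;\le\; 2\sqrt2\,h\,\max\{(\beta^-)^2,(\beta^+)^2\}\Big(\norm{(c_2^-,c_3^-)}^2 + \norm{(c_2^+,c_3^+)}^2\Big).
\]
(The case where $B$ does not meet $\Gamma$, so $B$ lies entirely in one subelement, is covered by the same bound with one piece empty.)

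Next I would produce a matching lower bound for the right-hand side of \eqref{eq:linear_IFE_trace_ineq_1}--\eqref{eq:linear_IFE_trace_ineq_2}. Expanding,
\[
\norm{\sqrt{\beta}\,\nabla v}_{L^2(K)}^2 = \beta^-|K^-|\,\norm{(c_2^-,c_3^-)}^2 + \beta^+|K^+|\,\norm{(c_2^+,c_3^+)}^2 .
\]
The only place the interface geometry can cause trouble is that one of $|K^-|$, $|K^+|$ may be arbitrarily small, so this sum cannot be bounded below using only the coefficients on the small piece. This is precisely what the equivalence from Remark \ref{rem:linear_IFE_coef_one_piece_bnd_by_another} is for: since $\max\{|K^-|,|K^+|\}\ge |K|/2$, assuming without loss of generality that $|K^+|\ge|K|/2$ gives $\norm{\sqrt{\beta}\,\nabla v}_{L^2(K)}^2 \ge \tfrac12\beta^+|K|\,\norm{(c_2^+,c_3^+)}^2$, and then $\norm{(c_2^-,c_3^-)}\le C\norm{(c_2^+,c_3^+)}$ from the Remark converts this into
\[
\norm{(c_2^-,c_3^-)}^2 + \norm{(c_2^+,c_3^+)}^2 \;\le\; \frac{C}{\min\{\beta^-,\beta^+\}\,|K|}\,\norm{\sqrt{\beta}\,\nabla v}_{L^2(K)}^2 ,
\]
with $C$ depending only on $\beta^\pm$; the symmetric case $|K^-|\ge|K|/2$ is identical.

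Combining the trace bound of the first paragraph with this lower bound, and absorbing all $\beta$-dependent factors into a single constant, yields $\norm{\beta v_p}_{L^2(B)}^2 \le C\,h\,|K|^{-1}\norm{\sqrt{\beta}\,\nabla v}_{L^2(K)}^2$ and the same estimate for $\norm{\beta\nabla v\cdot\bfn_B}_{L^2(B)}^2$; taking square roots gives \eqref{eq:linear_IFE_trace_ineq_1} and \eqref{eq:linear_IFE_trace_ineq_2}. The main obstacle, as noted, is the possibly degenerate area of a subelement, and it is dispatched entirely by Remark \ref{rem:linear_IFE_coef_one_piece_bnd_by_another}; everything else is an elementary area/length comparison. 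I would only emphasize, for the sake of the stated conclusion, that all constants must be verified to be independent of the interface parameters $d,e$ — which they are, because the sole nontrivial ingredient (the Remark) already has that property.
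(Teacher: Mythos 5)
Your proposal is correct and takes essentially the same route as the paper's proof: both exploit the piecewise-constant gradient, bound the edge integrals by $h$ times the squared coefficient norms, bound the element integral from below using the subelement with $\abs{K^s}\ge\abs{K}/2$, and invoke the coefficient equivalence of Remark \ref{rem:linear_IFE_coef_one_piece_bnd_by_another} to control the coefficients on the possibly degenerate piece. Your reorganization into a separate upper bound on $B$ and lower bound on $K$ is only a cosmetic repackaging of the paper's piece-by-piece argument.
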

\begin{proof}
Without loss of generality, we assume again that the interface $\Gamma$ intersects with the boundary of $K$ at
\begin{align*}
D = (0,dh), E = (eh, 0), \text{~with~} 0 \leq d \leq 1, 0 \leq e \leq 1,
\end{align*}
and the line $\overline{DE}$ separates $K$ into two subelement $K^-$ and $K^+$ with $A_3 \in K^+$ and
$\abs{K^+} \geq \frac{1}{2}\abs{K}$. Furthermore, we assume $B = \overline{A_1A_3}$ is an interface edge with
$B = B^- \cup B^+$. Similar arguments can be applied to establish the trace inequality in other cases.

By direct calculations, we have
\begin{eqnarray*}
  \norm{\beta^+ v_x}_{L^2(B^+)}^2&=& (c_2^+)^2 \abs{B^+}\big(\abs{\beta^+}\big)^2 \leq \big((c_2^+)^2 + (c_3^+)^2\big)\abs{K^+} \frac{\abs{B^+}}{\abs{K^+}}\big(\abs{\beta^+}\big)^2 \nonumber \\
  &=&\beta^+ \frac{\abs{B^+}}{\abs{K^+}} \norm{\sqrt{\beta^+} \nabla v}_{L^2(K^+)}^2 \leq C \frac{\abs{B^+}}{\abs{K^+}} \norm{\sqrt{\beta} \nabla v}_{L^2(K)}^2, \nonumber
\end{eqnarray*}
\emph{i.e.},
\begin{equation}\label{eq:linear_IFE_trace_ineq_3}
  \norm{\beta^+ v_x}_{L^2(B^+)}\leq 2Ch^{1/2}\abs{K}^{-1/2} \norm{\sqrt{\beta} \nabla v}_{L^2(K)}.
\end{equation}
Similarly, we can show that
\begin{equation}\label{eq:linear_IFE_trace_ineq_4}
\norm{\beta^+ v_y}_{L^2(B^+)}
\leq 2Ch^{1/2}\abs{K}^{-1/2} \norm{\sqrt{\beta} \nabla v}_{L^2(K)}.
\end{equation}
On $B^-$, applying the estimates in {Remark} \ref{rem:linear_IFE_coef_one_piece_bnd_by_another}, we have
\begin{align}
\norm{\beta^- v_x}_{L^2(B^-)}^2 &= (c_2^-)^2 \abs{B^-}\big(\abs{\beta^-}\big)^2 \leq C\big((c_2^+)^2 + (c_3^+)^2\big)\abs{K^+} \frac{\abs{B^-}}{\abs{K^+}}\big(\abs{\beta^-}\big)^2 \nonumber \\
&=C\beta^- \frac{\abs{B^-}}{\abs{K^+}} \norm{\sqrt{\beta^+} \nabla v}_{L^2(K^+)}^2 \leq C \frac{\abs{B^-}}{\abs{K^+}} \norm{\sqrt{\beta} \nabla v}_{L^2(K)}^2. \nonumber
\end{align}
Hence,
\begin{equation}\label{eq:linear_IFE_trace_ineq_5}
\norm{\beta^- v_x}_{L^2(B^-)}
\leq 2Ch^{1/2}\abs{K}^{-1/2} \norm{\sqrt{\beta} \nabla v}_{L^2(K)}.
\end{equation}
Similarly,
\begin{equation}
\norm{\beta^- v_y}_{L^2(B^-)}
\leq 2Ch^{1/2}\abs{K}^{-1/2} \norm{\sqrt{\beta} \nabla v}_{L^2(K)}. \label{eq:linear_IFE_trace_ineq_6}
\end{equation}
Then, the combination of \eqref{eq:linear_IFE_trace_ineq_3} and \eqref{eq:linear_IFE_trace_ineq_5}
yields the inequality \eqref{eq:linear_IFE_trace_ineq_1} for $p = x$. Similarly,
the inequality \eqref{eq:linear_IFE_trace_ineq_1} for $p = y$ follows from combining
\eqref{eq:linear_IFE_trace_ineq_4} and \eqref{eq:linear_IFE_trace_ineq_6}. Finally,
\eqref{eq:linear_IFE_trace_ineq_2} follows directly from \eqref{eq:linear_IFE_trace_ineq_1}.
\end{proof}

\subsection{Trace inequalities for bilinear IFE functions}

Without loss of generality, we consider a rectangular interface element $K$ with the following vertices:
\begin{align*}
A_1 = (0,0), ~~A_2 = (h, 0),~~ A_3 = (0,h),~~ A_4 = (h,h).
\end{align*}
Again, assume that the interface $\Gamma$ intersects with $\partial K$ at points
$D$ and $E$ and the linear $\overline{DE}$ separates $K$ into two subelements
$K^-$ and $K^+$, see the illustration on the right in Fig. \ref{fig:tri_rec_IFE_elements}. We assume that
$K$ is one of the two types of rectangular interface elements \cite{XHe_Thesis_Bilinear_IFE, XHe_TLin_YLin_Bilinear_Approximation}:

\noindent
{\bf Type I interface element}: The interface $\Gamma$ intersects $\partial K$ at
\begin{align*}
D = (0, dh), E=(eh, 0), ~~0 \leq d \leq 1, 0 \leq e \leq 1.
\end{align*}
{\bf Type II interface element}: The interface $\Gamma$ intersects $\partial K$ at
\begin{align*}
D = (dh, h), E=(eh, 0), ~~0 \leq d \leq 1, 0 \leq e \leq 1.
\end{align*}
On this interface element $K$, let $v$ be a bilinear IFE function in the following form:
\begin{align}
v(x,y) = \begin{cases}
v^-(x,y) = c_1^- + c_2^-x + c_3^-y + c_4xy, & \text{if~} (x, y) \in K^-, \\
v^+(x,y) = c_1^+ + c_2^+x + c_3^+y + c_4xy, & \text{if~} (x, y) \in K^+,
\end{cases}\label{eq:bilinear_IFE_c1c2c3c4_format}
\end{align}
which satisfies pertinent interface jump conditions \cite{XHe_Thesis_Bilinear_IFE, XHe_TLin_YLin_Convergence_IFE}.
First, using similar arguments, we can show that the coefficients of a bilinear IFE function
$v$ satisfy inequalities similar to those in Lemma \ref{lem:linear_IFE_coef_one_piece_bnd_by_another}.

\begin{lemma}\label{lem:bilinear_IFE_coef_one_piece_bnd_by_another}
There exists a constant $C>1$ independent of the interface location such that for every bilinear IFE function $v$ on the interface element $K$ defined in \eqref{eq:bilinear_IFE_c1c2c3c4_format} the following inequalities hold:
\begin{equation}\label{eq:bilinear_IFE_coef_one_piece_bnd_by_another}
  \frac{1}{C} \norm{(c_1^+, c_2^+, c_3^+, c_4)}\leq\norm{(c_1^-, c_2^-, c_3^-, c_4)} \leq C \norm{(c_1^+, c_2^+, c_3^+, c_4)}.
\end{equation}
\end{lemma}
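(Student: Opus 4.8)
The plan is to mimic the proof of Lemma \ref{lem:linear_IFE_coef_one_piece_bnd_by_another}, the only essential change being that the shared coefficient $c_4$ of the $xy$-term must be carried along as a parameter rather than solved for. As in the linear case, I would write out explicitly the interface jump conditions satisfied by the bilinear IFE function $v$ in \eqref{eq:bilinear_IFE_c1c2c3c4_format}: continuity at the two interface-edge intersection points $D$ and $E$, and continuity of the flux $\beta\,\partial v/\partial\bfn_{\overline{DE}}$ across the line segment $\overline{DE}$. These are three linear equations. The key observation is that the difference $v^+-v^-$ is the \emph{linear} function $(c_1^+-c_1^-)+(c_2^+-c_2^-)x+(c_3^+-c_3^-)y$ (the $c_4 xy$ terms cancel), so the two point-value conditions involve only $c_1^{\pm},c_2^{\pm},c_3^{\pm}$ and are formally identical to the linear case. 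The flux-jump condition is where $c_4$ enters: $\nabla v^{\pm} = (c_2^{\pm}+c_4 y,\ c_3^{\pm}+c_4 x)$, so along $\overline{DE}$ the condition $\beta^-\partial v^-/\partial\bfn = \beta^+\partial v^+/\partial\bfn$ becomes, after using the point-value relations to see that the $c_4$-dependent parts enter symmetrically, a relation of the form $M^-(c_1^-,c_2^-,c_3^-)^T = M^+(c_1^+,c_2^+,c_3^+)^T + c_4\, \bfn$ for the same matrices $M^{\pm}$ as in Lemma \ref{lem:linear_IFE_coef_one_piece_bnd_by_another} and some vector $\bfn$ whose entries are bounded (indeed polynomial in $d,e,h$).

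With the geometry normalized to $D=(0,dh)$, $E=(eh,0)$ (Type I; the Type II case is handled by an analogous normalization), the matrices $M^{\pm}$ are exactly those computed in the proof of Lemma \ref{lem:linear_IFE_coef_one_piece_bnd_by_another}, with $\det(M^s) = -\beta^s(d^2+e^2)h^2 \neq 0$ since $(d,e)\neq(0,0)$. Inverting, I would obtain
\begin{align*}
c_i^+ = f_{i1}c_1^- + f_{i2}c_2^- + f_{i3}c_3^- + f_{i4}\,c_4,\qquad i=1,2,3,
\end{align*}
where the $f_{ij}$, $1\le j\le 3$, are precisely the entries computed in \eqref{eq:linear_IFE_coef_one_piece_bnd_by_another_4}--\eqref{eq:linear_IFE_coef_one_piece_bnd_by_another} (hence bounded by a constant depending only on $\beta^-,\beta^+$), and $f_{i4}$ is a new entry whose size I would bound using the explicit formula and the facts $0\le d,e\le 1$ and $d^2+e^2$ cancelling against the numerator. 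One then gets $\abs{f_{ij}}\le C$ for all $1\le i\le 3$, $1\le j\le 4$, with $C$ depending only on $\beta^{\pm}$. Since the coefficient of the $xy$-term is the \emph{same} $c_4$ on both sides (it does not transform), the vector $(c_1^+,c_2^+,c_3^+,c_4)$ is obtained from $(c_1^-,c_2^-,c_3^-,c_4)$ by a matrix whose entries are all bounded by $C$ (the last row being $(0,0,0,1)$), which gives the second inequality in \eqref{eq:bilinear_IFE_coef_one_piece_bnd_by_another}; the first follows by symmetry, swapping the roles of $+$ and $-$ and inverting the same system.

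The main obstacle I anticipate is purely bookkeeping rather than conceptual: one must verify carefully that the $c_4$-dependent terms arising from $\nabla(c_4 xy)$ in the flux-jump condition really do collapse into a single bounded vector $\bfn$ after the point-value conditions are imposed, and that the resulting extra column $f_{i4}$ stays bounded uniformly in $d,e$ — in particular that no spurious factor of $1/h$ or $1/(d^2+e^2)$ survives. A secondary point is to confirm that the Type II configuration ($D=(dh,h)$, $E=(eh,0)$) yields an analogous nonsingular system with a determinant bounded below away from zero in the same normalized sense; this is stated in \cite{XHe_Thesis_Bilinear_IFE} and only requires redoing the elementary linear algebra with the shifted coordinates. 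No scaling argument between two different interface elements is needed here — everything is done on the single element $K$ — so the uniformity in the interface location is manifest once the entry bounds are in hand.
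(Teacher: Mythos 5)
Your proposal is correct and is exactly the argument the paper intends: the paper offers no proof of this lemma beyond the remark that it follows ``using similar arguments'' to Lemma \ref{lem:linear_IFE_coef_one_piece_bnd_by_another}, and your outline supplies those arguments --- the point-value conditions at $D$ and $E$ are unchanged because the $c_4xy$ terms cancel in $v^+-v^-$, and the extra $c_4$-column stays bounded because the factor $(d^2+e^2)h^2$ it carries cancels against $\det M^{\pm}=-\beta^{\pm}(d^2+e^2)h^2$. One clarification: the bilinear IFE flux condition is imposed in an averaged sense (the integral of $\beta\,\partial v/\partial\bfn$ over $\overline{DE}$, equivalently its midpoint value), not pointwise --- pointwise continuity is impossible for $c_4\neq 0$ and $\beta^-\neq\beta^+$ --- but since you reduce it to a single scalar equation of the form $M^-c^- = M^+c^+ + c_4\bfn$ anyway, this does not change your conclusion.
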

The proof of trace inequalities for a bilinear IFE function is a little more complicated because its gradient is
not a constant. The following lemma provides an aid.

\begin{lemma}\label{lem:bilinear_IFE_gradient_lower_bnd}
Assume $K$ is an interface element such that
\begin{align*}
\abs{K^s} \geq \frac{1}{2}\abs{K},
\end{align*}
with $s = -$ or $+$. Then there exists a polygon $\tK \subset K^s$ and two positive constants
$C_1$ and $C_2$ independent of the interface location such that
\begin{align}
&\abs{\tK} \geq C_1 \abs{K}, \label{eq:bilinear_IFE_gradient_lower_bnd_1}\\
&\frac{h}{\abs{\tK}} \norm{\sqrt{\beta^s}\nabla v}_{L^2(\tK)}^2 \geq C_2\beta^s
\big(h(c_2^s)^2 + h(c_3^s)^2 + h^3 (c_4)^2\big). \label{eq:bilinear_IFE_gradient_lower_bnd_2}
\end{align}
\end{lemma}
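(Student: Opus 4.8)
The plan is to prove Lemma~\ref{lem:bilinear_IFE_gradient_lower_bnd} in three stages: (i) construct $\tK$ explicitly as an axis-parallel square of side $h/2$ contained in $K^s$; (ii) evaluate $\norm{\sqrt{\beta^s}\nabla v}_{L^2(\tK)}^2$ exactly, using the form \eqref{eq:bilinear_IFE_c1c2c3c4_format} of $v$ restricted to $K^s$; and (iii) bound the resulting quadratic form from below by a compactness argument.

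For stage (i) the key point is that $K^s$ --- the subelement carrying at least half of $\abs{K}$ --- always contains an axis-parallel square whose \emph{both} side lengths are of order $h$; having a subregion merely of area comparable to $\abs{K}$ is not enough, because later we must also control the coefficient $c_4$ of the shared $xy$-term, and that requires room in both coordinate directions. I would argue by cases. For a Type~I element, if $K^s$ is the large (four- or five-sided) piece $K^+=\{dx+ey\ge deh\}$, then since $d,e\in[0,1]$ give $d+e-2de=d(1-e)+e(1-d)\ge0$, every point $(x,y)\in[h/2,h]\times[h/2,h]$ satisfies $dx+ey\ge\tfrac12(d+e)h\ge deh$, so we take $\tK=[h/2,h]\times[h/2,h]$; if instead $K^s$ is the corner triangle, then $\abs{K^s}\ge\tfrac12\abs{K}$ forces $d=e=1$ and $\tK=[0,h/2]\times[0,h/2]\subset K^s$. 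For a Type~II element, $K^s$ is a trapezoid whose horizontal width $w(y)$ at height $y$ is affine in $y$ with $w(y)\ge\tfrac12 h$ on $[h/2,h]$ when $d\ge e$ and on $[0,h/2]$ when $d\le e$, so $\tK=[0,h/2]\times[h/2,h]$ or $[0,h/2]\times[0,h/2]$ respectively (the opposite trapezoid being handled by the reflection $x\mapsto h-x$). In every case $\tK$ is a square of side $h/2$, hence $\abs{\tK}=h^2/4=\tfrac14\abs{K}$, which is \eqref{eq:bilinear_IFE_gradient_lower_bnd_1} with $C_1=1/4$.

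For stages (ii)--(iii), write $\tK=[a,a+\ell]\times[b,b+\ell]$ with $\ell=h/2$ and let $\bar a=a+\ell/2$, $\bar b=b+\ell/2$, both in $[0,h]$, be the centroid coordinates of $\tK$. Since $v|_{K^s}=c_1^s+c_2^sx+c_3^sy+c_4xy$ has $\nabla v=(c_2^s+c_4y,\,c_3^s+c_4x)$, a one-line integration gives the identity
\[
\norm{\nabla v}_{L^2(\tK)}^2=\abs{\tK}\Big[(c_2^s+c_4\bar b)^2+(c_3^s+c_4\bar a)^2+\tfrac{\ell^2}{6}c_4^2\Big].
\]
Setting $\mu=\bar b/h$, $\nu=\bar a/h\in[0,1]$ and $z=hc_4$, the bracket equals $P_{\mu,\nu}(c_2^s,c_3^s,z):=(c_2^s+\mu z)^2+(c_3^s+\nu z)^2+\tfrac{1}{24}z^2$, a quadratic form in $(c_2^s,c_3^s,z)$ whose symmetric matrix has leading principal minors $1$, $1$, $\tfrac{1}{24}$ for \emph{every} $(\mu,\nu)$ and is therefore positive definite. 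Its least eigenvalue is continuous in $(\mu,\nu)$ on the compact square $[0,1]^2$, hence bounded below by a constant $C_0>0$ independent of the interface location, so $P_{\mu,\nu}(c_2^s,c_3^s,z)\ge C_0\big((c_2^s)^2+(c_3^s)^2+h^2c_4^2\big)$. Multiplying the identity above by $\beta^s$ then yields
\[
\norm{\sqrt{\beta^s}\nabla v}_{L^2(\tK)}^2\ge C_0\beta^s\abs{\tK}\big((c_2^s)^2+(c_3^s)^2+h^2c_4^2\big)=C_0\frac{\abs{\tK}}{h}\beta^s\big(h(c_2^s)^2+h(c_3^s)^2+h^3c_4^2\big),
\]
which is \eqref{eq:bilinear_IFE_gradient_lower_bnd_2} with $C_2=C_0$.

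The main obstacle is the geometric stage. The integration and the linear-algebra estimate are routine, but one must be careful to place $\tK$ in the ``fat'' half of $K^s$: for some interface positions $K^s$ genuinely contains regions of the right area that are thin in one direction (for instance a narrow end of a Type~II trapezoid), and for such a $\tK$ the $\ell^2c_4^2$-term above would be too small to survive the absorption of the cross terms needed to recover $(c_2^s)^2$ and $(c_3^s)^2$. The second delicate point is that positive definiteness of $P_{\mu,\nu}$ for each fixed $(\mu,\nu)$ does not suffice on its own; what is needed is a lower bound on the least eigenvalue that is \emph{uniform} in $(\mu,\nu)$, which the compactness of $[0,1]^2$ supplies --- alternatively one can note that the matrix of $P_{\mu,\nu}$ has determinant exactly $1/24$ for all $(\mu,\nu)$ and bound the least eigenvalue below in terms of the determinant and the trace.
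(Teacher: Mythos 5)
Your proof is correct and follows essentially the same route as the paper's: both arguments locate a quadrant square of side $h/2$ inside $K^s$ (the paper by noting that one of the four midpoint subsquares must lie in $K^s$, you by an explicit Type~I/Type~II case analysis), take $\tK$ to be that square so that $\abs{\tK}=\abs{K}/4$, and then bound the exactly computed quadratic form $\norm{\nabla v}_{L^2(\tK)}^2$ from below by $(c_2^s)^2+(c_3^s)^2+h^2c_4^2$. The only difference is in the final step, where the paper absorbs the cross terms via Young's inequality with a tuned parameter $\sigma\in(9/12,7/9)$ on the specific square $K_4$, while you invoke uniform positive definiteness of the $3\times 3$ coefficient matrix (determinant identically $1/24$); both give the same conclusion with the same constants' dependence.
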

\begin{proof} Let us partition $K$ into $4$ congruent squares $K_i, i = 1, 2, 3, 4$ by the lines connecting the two pairs of opposite mid points of edges of $K$ such that $A_i$ is a vertex of $K_i$. Since $\abs{K^s} \geq \frac{1}{2}\abs{K}$, one of these $4$ small squares must be inside $K^s$. Without loss generality, we assume that $K_4 \subset K^s$. By direct calculations we have
\begin{align*}
&\norm{v_x}_{L^2(K_4)}^2 = \frac{h^2}{48}(12(c_2^s)^2 + 18c_2^sc_4h + 7 c_4^2h^2) \geq \frac{h^2}{48}
\left[\left(12 - \frac{9}{\sigma_1}\right)(c_2^s)^2 + (7-9\sigma_1)c_4^2h^2\right], \\
&\norm{v_y}_{L^2(K_4)}^2 = \frac{h^2}{48}(12(c_3^s)^2 + 18c_3^sc_4h + 7 c_4^2h^2)\geq \frac{h^2}{48}
\left[\left(12 - \frac{9}{\sigma_2}\right)(c_3^s)^2 + (7-9\sigma_2)c_4^2h^2\right],
\end{align*}
where $\sigma_1$ and $\sigma_2$ are arbitrary positive constants. Letting $\sigma_i = \sigma \in (9/12, 7/9), i = 1, 2$ in the above inequalities leads to
\begin{align*}
\norm{\nabla v}_{L^2(K_4)}^2 \geq Ch^2((c_2^s)^2 + (c_3^s)^2 + c_4^2h^2)
\end{align*}
where
\begin{align*}
C = \min\{12 - \frac{9}{\sigma}, 2(7-9\sigma)\} > 0.
\end{align*}
Then, \eqref{eq:bilinear_IFE_gradient_lower_bnd_1} and \eqref{eq:bilinear_IFE_gradient_lower_bnd_2}
follow by letting $\tK = K_4$.
\commentout{
Clear[dd, ee, tmp1, tmp2, tmp3, x, y]
Phi[x_, y_] = c1 + c2*x + c3*y + c4*x*y
Phidx[x_, y_] = D[Phi[x, y], {x, 1}]
Phidy[x_, y_] = D[Phi[x, y], {y, 1}]

"Integrals in the domain tK"
PhidxArea1 =
 Simplify[
  Integrate[Integrate[Phidx[x, y]^2, {y, h/2, h}], {x, h/2, h}]]
PhidyArea1 =
 Simplify[
  Integrate[Integrate[Phidy[x, y]^2, {y, h/2, h}], {x, h/2, h}]]
}
\end{proof}

Now, we are ready to establish the trace inequality for bilinear IFE functions on an interface element $K = \square A_1A_2A_3A_4$.
\begin{lemma}\label{lem:blinear_IFE_trace_ineq}
There exists a constant $C$ independent of the interface location such that for every bilinear IFE function $v(x,y)$ on $K$ the following
inequalities hold
\begin{align}
&\norm{\beta v_p}_{L^2(B)} \leq C h^{1/2} \abs{K}^{-1/2}\norm{\sqrt{\beta} \nabla v}_{L^2(K)}, ~~p = x,  y,  \label{eq:blinear_IFE_trace_ineq_1}\\
&\norm{\beta \nabla v \cdot \bfn_B}_{L^2(B)} \leq C h^{1/2} \abs{K}^{-1/2}\norm{\sqrt{\beta} \nabla v}_{L^2(K)}. \label{eq:blinear_IFE_trace_ineq_2}
\end{align}
\end{lemma}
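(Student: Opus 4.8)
The plan is to follow the architecture of the proof of Lemma~\ref{lem:linear_IFE_trace_ineq}, with Lemma~\ref{lem:bilinear_IFE_gradient_lower_bnd} taking over the role played there by the piecewise constancy of $\nabla v$. Without loss of generality I would normalize $K$ to the configuration above, treat a Type I interface element first (Type II being entirely analogous), and assume the interface cuts $K$ into $K^-$ and $K^+$ with $\abs{K^+}\geq\frac12\abs{K}$ (the case $\abs{K^-}\geq\frac12\abs{K}$ being symmetric). Fix an interface edge $B$ and write $B=B^-\cup B^+$ with $B^s\subset K^s$; if $B$ lies entirely in one subelement the argument only simplifies. Since $\beta\nabla v\cdot\bfn_B$ is a linear combination of $\beta v_x$ and $\beta v_y$ with coefficients bounded by $1$, \eqref{eq:blinear_IFE_trace_ineq_2} will follow from \eqref{eq:blinear_IFE_trace_ineq_1}; and because $\norm{\beta v_p}_{L^2(B)}^2=\norm{\beta^+ v_p}_{L^2(B^+)}^2+\norm{\beta^- v_p}_{L^2(B^-)}^2$, it suffices to bound each of the two pieces by $Ch\abs{K}^{-1}\norm{\sqrt\beta\,\nabla v}_{L^2(K)}^2$.

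The $B^+$ piece is handled directly. On any edge of $K$ the restriction of $v_p^+$ is an affine function of one variable whose slope is $\pm c_4$, so a one-line computation gives
\begin{equation*}
\norm{\beta^+ v_p}_{L^2(B^+)}^2\leq C\,\abs{B^+}\bigl((c_2^+)^2+(c_3^+)^2+h^2c_4^2\bigr)\leq C\,h\bigl((c_2^+)^2+(c_3^+)^2+h^2c_4^2\bigr).
\end{equation*}
I would then invoke Lemma~\ref{lem:bilinear_IFE_gradient_lower_bnd} with $s=+$ to obtain a polygon $\tK\subset K^+$ with $\abs{\tK}\geq C_1\abs{K}$ and
\begin{equation*}
\beta^+ h\bigl((c_2^+)^2+(c_3^+)^2+h^2c_4^2\bigr)\leq \frac{1}{C_2}\,\frac{h}{\abs{\tK}}\norm{\sqrt{\beta^+}\,\nabla v}_{L^2(\tK)}^2\leq \frac{h}{C_1C_2\abs{K}}\norm{\sqrt\beta\,\nabla v}_{L^2(K)}^2,
\end{equation*}
so that $(c_2^+)^2+(c_3^+)^2+h^2c_4^2\leq C\abs{K}^{-1}\norm{\sqrt\beta\,\nabla v}_{L^2(K)}^2$; feeding this back into the previous display and absorbing the fixed constants $\beta^\pm$ into $C$ yields the desired bound on $\norm{\beta^+ v_p}_{L^2(B^+)}^2$.

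For the $B^-$ piece the same computation gives $\norm{\beta^- v_p}_{L^2(B^-)}^2\leq Ch\bigl((c_2^-)^2+(c_3^-)^2+h^2c_4^2\bigr)$, and it remains to bound the $K^-$ gradient coefficients in terms of the $K^+$ ones. This is where I would use the bilinear analogue of Remark~\ref{rem:linear_IFE_coef_one_piece_bnd_by_another}: solving the interface jump conditions for a bilinear IFE function expresses $(c_2^-,c_3^-)$ as a linear image of $(c_2^+,c_3^+,c_4)$, with the constant coefficient $c_1$ decoupling exactly as in Lemma~\ref{lem:linear_IFE_coef_one_piece_bnd_by_another} and the $c_4$-contribution entering only at order $hc_4$ because all geometric data of $K$ have size $O(h)$; the governing $2\times2$ system has determinant proportional to $\beta^\pm(d^2+e^2)$ and numerators carrying matching powers of $d$ and $e$, so $(c_2^-)^2+(c_3^-)^2+h^2c_4^2\leq C\bigl((c_2^+)^2+(c_3^+)^2+h^2c_4^2\bigr)$ with $C$ independent of the interface location, and the $B^-$ estimate reduces to the $B^+$ one already established. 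Adding the two pieces proves \eqref{eq:blinear_IFE_trace_ineq_1}, and then \eqref{eq:blinear_IFE_trace_ineq_2}.

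The step I expect to be the main obstacle is exactly this last coefficient comparison: one must write out the jump conditions for the bilinear IFE, check that the governing $2\times2$ system is genuinely nonsingular with the ``right'' dependence on $(d,e)$ so that its inverse stays bounded as the interface approaches a vertex, and verify that the $c_4$-terms appear with the correct power of $h$; once that uniform bound is in hand, the remaining work --- enumerating which edge of $K$ is an interface edge, the symmetric case $\abs{K^-}\geq\frac12\abs{K}$, and the Type II configuration --- is routine and parallels the linear case.
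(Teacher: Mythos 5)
Your proposal follows essentially the same route as the paper: decompose the interface edge as $B=B^-\cup B^+$, bound the piece lying in the majority subelement $K^+$ directly via Lemma~\ref{lem:bilinear_IFE_gradient_lower_bnd}, and reduce the $B^-$ piece to the $B^+$ one through the coefficient comparison of Lemma~\ref{lem:bilinear_IFE_coef_one_piece_bnd_by_another}, exactly as the paper does (it simply computes the edge integrals \eqref{eq:trace_inq_bilinear_vx_B-}--\eqref{eq:trace_inq_bilinear_vy_B+} exactly rather than estimating them). Your scaled form of the coefficient bound, with $h^2c_4^2$ and the constant term $c_1$ decoupled, is in fact the version actually needed and is slightly more explicit than the paper's citation of the unscaled norm equivalence, but the substance is the same.
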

\begin{proof}
 Without loss of generality, we assume that $K$ is a Type I interface element and
$B = \overline{A_1A_3}$ is an interface edge. To be more specific, we also assume that
$A_4 \in K^+$ and $\abs{K^+} \geq \frac{1}{2}\abs{K}$. Then
\begin{align*}
B = B^- \cup B^+, ~~B^- = \overline{A_1D}, ~B^+ = \overline{DA_3}.
\end{align*}
Direct calculations lead to
\begin{eqnarray}
  \qquad\quad\norm{\beta^- v_x}_{L^2(B^-)}^2 &=& (\beta^-)^2\big(dh(c_2^-)^2 + d^2h^2c_2^-c_4 + \frac{1}{3}d^3h^3 c_4^2\big),
\label{eq:trace_inq_bilinear_vx_B-} \\
  \norm{\beta^- v_y}_{L^2(B^-)}^2 &=& (\beta^-)^2dh (c_3^-)^2, \label{eq:trace_inq_bilinear_vy_B-} \\
  \norm{\beta^+ v_x}_{L^2(B^+)}^2 &=& \big(\beta^+\big)^2\left[(1-d)h(c_2^+)^2 + (1-d^2)h^2(c_2^+)c_4 + \frac{1}{3}(1-d^3)h^3 c_4^2\right], \label{eq:trace_inq_bilinear_vx_B+} \\
  \norm{\beta^+ v_y}_{L^2(B^+)}^2 &=& \big(\beta^+\big)^2(1-d)h(c_3^+)^2. \label{eq:trace_inq_bilinear_vy_B+}
\end{eqnarray}
%
Applying \eqref{eq:bilinear_IFE_gradient_lower_bnd_1} and \eqref{eq:bilinear_IFE_gradient_lower_bnd_2}
to \eqref{eq:trace_inq_bilinear_vx_B+} and \eqref{eq:trace_inq_bilinear_vy_B+} yields
\begin{equation}\label{eq:trace_inq_bilinear_vs_B-}
\quad\norm{\beta^+ v_p}_{L^2(B^+)}^2 \leq C \frac{h}{\abs{K}}\norm{\sqrt{\beta} \nabla v}_{L^2(K^+)}^2
\leq C \frac{h}{\abs{K}}\norm{\sqrt{\beta} \nabla v}_{L^2(K)}^2,~~p = x, y.
\end{equation}
Moreover, applying \eqref{eq:bilinear_IFE_coef_one_piece_bnd_by_another},
\eqref{eq:bilinear_IFE_gradient_lower_bnd_1}, and \eqref{eq:bilinear_IFE_gradient_lower_bnd_2}
to \eqref{eq:trace_inq_bilinear_vx_B-} and \eqref{eq:trace_inq_bilinear_vy_B-} leads to
\begin{equation}\label{eq:trace_inq_bilinear_vs_B+}
\norm{\beta^- v_p}_{L^2(B^-)}^2 \leq C \frac{h}{\abs{K}}\norm{\sqrt{\beta} \nabla v}_{L^2(K^+)}^2
\leq C \frac{h}{\abs{K}}\norm{\sqrt{\beta} \nabla v}_{L^2(K)}^2,~~p = x, y.
\end{equation}
Then, \eqref{eq:blinear_IFE_trace_ineq_1} follows from combining
\eqref{eq:trace_inq_bilinear_vs_B-} and \eqref{eq:trace_inq_bilinear_vs_B+} together.
Finally, \eqref{eq:blinear_IFE_trace_ineq_2} obviously follows from \eqref{eq:blinear_IFE_trace_ineq_1}.
\end{proof}


\section{Error Estimation for Partially Penalized IFE Methods}
We show that the IFE solution to the interface problem solved from \eqref{eq:IFE_eq} has an optimal convergence from
the point of the polynomials used in the involved IFE spaces. Unless otherwise specified, we always
assume that $\cT_h, 0<h<1$ is a family of regular Cartesian triangular or rectangular meshes \cite{PCiarlet_FEM}.

We start from proving the coercivity of the bilinear form $a_h(\cdot, \cdot)$ defined in
\eqref{eq:IFE_BF} on the IFE space $S_h(\Omega)$ with respect to the following energy norm:
\begin{equation} \label{eq:IP-IFE_method_dis_H1_norm}
\norm{v_h}_h = \left(\sum_{K \in \cT_h} \int_K \beta \nabla v_h \cdot \nabla v_h dX +  \sum_{B \in {\mathring \cE}_h^i}\int_B \frac{\sigma_B^0}{\abs{B}^\alpha} [v_h][v_h] ds\right)^{1/2}.
\end{equation}

\begin{lemma} \label{lem:coercivity}
There exists a constant $\kappa>0$ such that
\begin{equation}
\kappa \norm{v_h}_h^2 \leq a_h(v_h, v_h),~~\forall v_h \in S_h(\Omega) \label{eq:coercivity}
\end{equation}
is true for $\epsilon = 1$ unconditionally and is true for $\epsilon = 0$ or $\epsilon = -1$ under the condition that the stabilization parameter $\sigma_B^0$ in $a_h(\cdot, \cdot)$ is large enough. \\
\end{lemma}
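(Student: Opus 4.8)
The plan is to treat the three values of $\epsilon$ separately. For $\epsilon=1$ the statement is immediate: setting $v_h=w_h$ in \eqref{eq:IFE_BF} makes the second and third sums equal up to sign, so they cancel and $a_h(v_h,v_h)=\norm{v_h}_h^2$; hence \eqref{eq:coercivity} holds with $\kappa=1$ and no condition on $\sigma_B^0$. For $\epsilon\in\{0,-1\}$ the same substitution collapses the bilinear form to
\[
a_h(v_h,v_h)=\sum_{K\in\cT_h}\int_K\beta\,|\nabla v_h|^2\,dX
-(1-\epsilon)\sum_{B\in{\mathring \cE}_h^i}\int_B\{\beta\nabla v_h\cdot\bfn_B\}[v_h]\,ds
+\sum_{B\in{\mathring \cE}_h^i}\int_B\frac{\sigma_B^0}{\abs{B}^\alpha}[v_h]^2\,ds ,
\]
with $1-\epsilon\in\{1,2\}>0$, so the whole task is to absorb the middle consistency sum into a small fraction of the first (positive) sum plus a controlled fraction of the penalty sum.

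The heart of the argument is a per-edge estimate. Fixing $B\in{\mathring \cE}_h^i$ with edge-neighbours $T_{B,1}$ and $T_{B,2}$, I would use Cauchy--Schwarz together with the definition of the average $\{\cdot\}$ to get
\[
\Big|\int_B\{\beta\nabla v_h\cdot\bfn_B\}[v_h]\,ds\Big|
\le\frac12\sum_{i=1,2}\norm{(\beta\nabla v_h\cdot\bfn_B)|_{T_{B,i}}}_{L^2(B)}\;\norm{[v_h]}_{L^2(B)} ,
\]
and then bound each factor $\norm{(\beta\nabla v_h\cdot\bfn_B)|_{T_{B,i}}}_{L^2(B)}$ by a trace inequality for the edge $B$ of $T_{B,i}$: Lemma \ref{lem:linear_IFE_trace_ineq} or Lemma \ref{lem:blinear_IFE_trace_ineq} when $T_{B,i}$ is an interface element, and the classical inequality \eqref{eq:trace_inq_2} applied to $\nabla v_h$ together with an inverse inequality on the polynomial piece when $T_{B,i}$ is a non-interface element (recall that an interface edge may well have a non-interface element on one side). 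In every case this gives $\norm{(\beta\nabla v_h\cdot\bfn_B)|_{T_{B,i}}}_{L^2(B)}\le C h^{1/2}\abs{T_{B,i}}^{-1/2}\norm{\sqrt{\beta}\,\nabla v_h}_{L^2(T_{B,i})}$ with $C$ independent of the interface location; mesh regularity ($\abs{T}\ge c h^2$ and $\abs{B}$ of order $h$) turns $h^{1/2}\abs{T_{B,i}}^{-1/2}$ into $C h^{-1/2}$, and a weighted Young inequality with parameter $\delta>0$ then yields
\[
\Big|\int_B\{\beta\nabla v_h\cdot\bfn_B\}[v_h]\,ds\Big|
\le\delta\sum_{i=1,2}\norm{\sqrt{\beta}\,\nabla v_h}_{L^2(T_{B,i})}^2
+\frac{C\,h^{\alpha-1}}{\delta}\int_B\frac{1}{\abs{B}^\alpha}[v_h]^2\,ds .
\]

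Summing this over $B\in{\mathring \cE}_h^i$, and using that each $K\in\cT_h$ is an edge-neighbour of at most $d_K\le4$ interface edges, the first right-hand term contributes at most $C\delta\sum_{K\in\cT_h}\int_K\beta\,|\nabla v_h|^2\,dX$ and the second at most $\frac{C\,h^{\alpha-1}}{\delta}\sum_{B\in{\mathring \cE}_h^i}\int_B\frac{1}{\abs{B}^\alpha}[v_h]^2\,ds$. Substituting back into the collapsed form,
\[
a_h(v_h,v_h)\ge\big(1-(1-\epsilon)C\delta\big)\sum_{K\in\cT_h}\int_K\beta\,|\nabla v_h|^2\,dX
+\sum_{B\in{\mathring \cE}_h^i}\Big(1-\frac{(1-\epsilon)C\,h^{\alpha-1}}{\delta\,\sigma_B^0}\Big)\int_B\frac{\sigma_B^0}{\abs{B}^\alpha}[v_h]^2\,ds .
\]
I would then first choose $\delta$ small (depending only on $\epsilon$, $\beta^\pm$, and the mesh-regularity constants) so that $(1-\epsilon)C\delta\le\frac12$, and then require $\sigma_B^0$ to exceed a threshold depending only on $\delta$, $\epsilon$, $\beta^\pm$, and the mesh regularity — uniform in $h$ since $h<1$ and $\alpha\ge1$ force $h^{\alpha-1}\le1$ — so that $(1-\epsilon)C\,h^{\alpha-1}/(\delta\,\sigma_B^0)\le\frac12$. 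This leaves $a_h(v_h,v_h)\ge\frac12\norm{v_h}_h^2$, i.e. \eqref{eq:coercivity} with $\kappa=\frac12$.

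The main obstacle, in fact the only nontrivial point, is the per-edge estimate — specifically the requirement that the trace inequalities invoked there have constants \emph{independent of where $\Gamma$ meets the element}, which is exactly the content of Lemmas \ref{lem:linear_IFE_trace_ineq} and \ref{lem:blinear_IFE_trace_ineq}. Without that uniformity the admissible lower bound on $\sigma_B^0$, and hence the constant $\kappa$, would degenerate as $\Gamma$ approaches a vertex of an interface element, and coercivity would fail to be uniform over the mesh family. Everything else — the inverse estimate on polynomial restrictions, the finite-overlap counting over edges, and the Cauchy--Schwarz and Young manipulations — is routine.
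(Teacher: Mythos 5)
Your proposal is correct and follows essentially the same route as the paper's proof: immediate cancellation for $\epsilon=1$, and for $\epsilon\in\{0,-1\}$ a per-edge Cauchy--Schwarz estimate combined with the interface-uniform trace inequalities of Lemmas \ref{lem:linear_IFE_trace_ineq} and \ref{lem:blinear_IFE_trace_ineq}, a Young inequality with parameter $\delta$, the condition $\alpha\geq 1$, and a summation over interface edges before fixing $\delta$ and a threshold for $\sigma_B^0$. The only (harmless) difference is that you explicitly treat the case of a non-interface neighbour of an interface edge via the classical trace and inverse estimates, a point the paper passes over silently.
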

\begin{proof}
 First, for $\epsilon = 1$, we note that the coercivity follows directly from the definitions of
$a_h(\cdot, \cdot)$  and $\norm{\cdot}_h$.

For $\epsilon = -1, 0$, note that
\begin{align}
a_h(v_h, v_h) &= \sum_{K \in \cT_h} \int_K \beta \nabla v_h \cdot \nabla v_h dX + (\epsilon - 1) \sum_{B \in {\mathring \cE}_h^i} \int_B \left\{\beta \nabla v_h \cdot \bfn_B\right\}[v_h] ds \label{eq:coercivity_2} \\
& + \sum_{B \in {\mathring \cE}_h^i}\int_B \frac{\sigma_B^0}{\abs{B}^\alpha} [v_h][v_h] ds, \nonumber
\end{align}
and the main concern is the second term on the right hand side. For each interface
edge $B \in {\mathring \cE}_h^i$ we let $K_{B,i}\in \cT_h, i = 1, 2$ be the two elements sharing $B$ as their common edge. Then, by the trace inequality \eqref{eq:linear_IFE_trace_ineq_2} or \eqref{eq:blinear_IFE_trace_ineq_2} and using $\alpha \geq 1$, we have,
\begin{eqnarray*}
   & & \int_B \left\{\beta \nabla v_h \cdot \bfn_B\right\}[v_h] ds \leq \norm{\left\{\beta \nabla v_h \cdot \bfn_B\right\}}_{L^2(B)} \norm{[v_h]}_{L^2(B)} \\
   &\leq& \left(\frac{1}{2}\norm{(\beta \nabla v_h \cdot \bfn_B)|_{K_{B,1}}}_{L^2(B)}
+\frac{1}{2}\norm{(\beta \nabla v_h \cdot \bfn_B)|_{K_{B,2}}}_{L^2(B)}\right)\norm{[v_h]}_{L^2(B)} \\
   &\leq& \left(\frac{C}{2}h_{K_{B,1}}^{-1/2} \norm{\sqrt{\beta} \nabla v_h}_{L^2(K_{B,1})} +  \frac{C}{2}h_{K_{B,2}}^{-1/2} \norm{\sqrt{\beta} \nabla v_h}_{L^2(K_{B,2})}\right)\norm{[v_h]}_{L^2(B)} \\
   &=& \Frac{C}{2}\abs{B}^{\alpha/2}\left(h_{K_{B,1}}^{-1/2} \norm{\sqrt{\beta} \nabla v_h}_{L^2(K_{B,1})} +  h_{K_{B,2}}^{-1/2} \norm{\sqrt{\beta}\nabla v_h}_{L^2(K_{B,2})}\right)\frac{1}{\abs{B}^{\alpha/2}}\norm{[v_h]}_{L^2(B)} \\
   &\leq& C\left(\norm{\sqrt{\beta} \nabla v_h}_{L^2(K_{B,1})}^2 + \norm{\sqrt{\beta} \nabla v_h}_{L^2(K_{B,2})}^2\right)^{1/2} \frac{1}{\abs{B}^{\alpha/2}}\norm{[v_h]}_{L^2(B)}.
\end{eqnarray*}
%
%
Therefore, for any $\delta > 0$, we have
\begin{align}
& \sum_{B \in {\mathring \cE}_h^i} \int_B \left\{\beta \nabla v_h \cdot \bfn_B\right\}[v_h] ds \label{eq:coercivity_3} \\
\leq &\sum_{B \in {\mathring \cE}_h^i} C\left(\norm{\sqrt{\beta} \nabla v_h}_{L^2(K_{B,1})}^2 + \norm{\sqrt{\beta} \nabla v_h}_{L^2(K_{B,2})}^2\right)^{1/2} \frac{1}{\abs{B}^{\alpha/2}}\norm{[v_h]}_{L^2(B)} \nonumber \\
\leq & \frac{\delta}{2}\sum_{K\in \cT_h}\norm{\sqrt{\beta} \nabla v_h}_{L^2(K)}^2 + \frac{C}{2\delta}\sum_{B \in {\mathring \cE}_h^i}\frac{1}{\abs{B}^{\alpha}}\norm{[v_h]}_{L^2(B)}^2.\nonumber
\end{align}
Then for $\epsilon = 0$ we let $\delta = 1$ and $\sigma_B^0 = C$, and for $\epsilon = -1$ we let $\delta = 1/2$ and $\sigma_B^0 = 5C/2$, where $C$ is in the above inequality.
The coercivity result \eqref{eq:coercivity} follows from using these parameters in \eqref{eq:coercivity_3} and
putting it in \eqref{eq:coercivity_2}.
\end{proof}

In the error estimation for the IFE solution, we need to use the fact that both linear and bilinear
IFE spaces have the optimal approximation capability
\cite{XHe_Thesis_Bilinear_IFE,XHe_TLin_YLin_Bilinear_Approximation,ZLi_TLin_YLin_RRogers_linear_IFE}. In particular, for every
$u \in \tH_0^2(\Omega)$ satisfying the interface jump conditions \eqref{eq:bvp_int_1} and \eqref{eq:bvp_int_2},
there exists a constant $C$ such that the interpolation $I_hu$ in the (either linear or bilinear) IFE space
$S_h(\Omega)$ has the following error bound:
\begin{align}
\norm{u - I_hu}_{L^2(\Omega)} + h\left(\sum_{T\in\mathcal{T}_h}\norm{u - I_hu}^2_{H^1(T)}\right)^{\frac{1}{2}} \leq Ch^2\norm{u}_{\tilde H^2(\Omega)}. \label{eq:intp_error_bnd}
\end{align}
In addition, we also need the error bound for $I_hu$ on interface edges which is given in the following
lemma.

\begin{lemma}\label{lem:interp_error_bnd_edge}
For every
$u \in \tH^{3}(\Omega)$ satisfying the interface jump conditions \eqref{eq:bvp_int_1} and \eqref{eq:bvp_int_2}, there exists a constant $C$ independent of the
interface such that its interpolation $I_hu$ in the IFE space
$S_h(\Omega)$ has the following error bound:
\begin{align}\label{eq:interp_error_bnd_edge}
\norm{\beta (\nabla(u-I_hu))|_{K} \cdot \bfn_B}_{L^2(B)}^2 \leq
C\big(h^2\norm{u}_{\tilde H^3(\Omega)}^2 + h \norm{u}_{\tilde H^2(K)}^2\big)
\end{align}
where $K$ is an interface element and $B$ is one of its interface edge.
\end{lemma}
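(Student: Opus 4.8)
The plan is to localize $B$ at the point where $\Gamma$ crosses it, writing $B=B^-\cup B^+$ with $B^s\subset\overline{\Omega^s}$, and on each piece to compare $I_hu$ with an auxiliary piecewise linear (resp. bilinear) function $w$ on $K$ that is built to satisfy the \emph{same} interface jump conditions across $\overline{DE}$ as the IFE basis functions. This choice pays off twice: $w-I_hu\in S_h(K)$, so the IFE trace inequality \eqref{eq:linear_IFE_trace_ineq_2} (or \eqref{eq:blinear_IFE_trace_ineq_2}) applies to it; and on each subelement $w$ can be taken to reproduce optimal‑order polynomial approximation of the corresponding piece of $u$, using the construction behind \eqref{eq:intp_error_bnd} together with a Sobolev extension of $u^s:=u|_{\Omega^s}$ to an $H^3$ function $\tilde u^s$ on $K$ (after the customary scaling argument, with extension constants independent of $h$ and of the interface position). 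Since $u=\tilde u^s$ on $B^s$, I would then split, on each $B^s$,
\[
\beta^s\nabla(u-I_hu)|_{K^s}\cdot\bfn_B=\beta^s\nabla(\tilde u^s-w|_{K^s})\cdot\bfn_B+\beta^s\nabla(w-I_hu)|_{K^s}\cdot\bfn_B .
\]

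For the first term, $\tilde u^s-w|_{K^s}\in H^2(K)$, so the classical trace inequality \eqref{eq:trace_inq_2} on $K$ applies; combined with the approximation property of $w$ and standard interpolation estimates (for the linear case the second derivatives of the polynomial part vanish; for the bilinear case the mixed derivative is controlled by $\norm{\tilde u^s}_{H^2(K)}$) this gives $\norm{\nabla(\tilde u^s-w|_{K^s})\cdot\bfn_B}_{L^2(B)}^2\le C h^{-1}\big(\norm{\nabla(\tilde u^s-w)}_{L^2(K)}^2+h^2\norm{\nabla^2(\tilde u^s-w)}_{L^2(K)}^2\big)\le C h\,\norm{\tilde u^s}_{H^2(K)}^2$. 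One then bounds $\norm{\tilde u^s}_{H^2(K)}$ by $C\norm{u}_{\tH^2(K)}$ on the part of $K$ inside $\Omega^s$, and, on the thin sliver between $\Gamma$ and $\overline{DE}$ (area $O(h^3)$), by a Sobolev‑embedding estimate that contributes only a higher power of $h$ times $\norm{u}_{\tH^3(\Omega)}$. This term is therefore bounded by $C\big(h\norm{u}_{\tH^2(K)}^2+h^2\norm{u}_{\tH^3(\Omega)}^2\big)$.

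For the second term, $w-I_hu\in S_h(K)$, so \eqref{eq:linear_IFE_trace_ineq_2} or \eqref{eq:blinear_IFE_trace_ineq_2} gives $\norm{\beta\nabla(w-I_hu)\cdot\bfn_B}_{L^2(B)}\le C h^{1/2}\abs{K}^{-1/2}\norm{\sqrt\beta\,\nabla(w-I_hu)}_{L^2(K)}\le C h^{-1/2}\norm{\sqrt\beta\,\nabla(w-I_hu)}_{L^2(K)}$. I would then pass through $u$: on each subelement $\norm{\nabla(w-u)}_{L^2(K^s)}\le C h\norm{u}_{\tH^2(K)}$ from the approximation property of $w$ (the contribution of the $\Gamma$--$\overline{DE}$ sliver, where $w$ and $u$ carry different subdomain data, being handled by a trace estimate on $\Gamma$ and the $O(h^2)$ sliver width), while $\norm{\nabla(u-I_hu)}_{L^2(K)}\le C h\norm{u}_{\tH^2(K)}$ is the local (element‑wise) form of \eqref{eq:intp_error_bnd}; hence this term is bounded by $C h\norm{u}_{\tH^2(K)}^2$. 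Squaring and summing the two contributions over $B^-$ and $B^+$ yields \eqref{eq:interp_error_bnd_edge}.

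The main obstacle is the construction of $w$ meeting all requirements simultaneously — satisfying the jump conditions on $\overline{DE}$, delivering optimal approximation of $u^s$ on each (possibly very thin) subelement, and with constants independent of the interface location — together with the estimates on the thin subelement. This is precisely where the results of Section 3 are essential: the coefficient‑equivalence Lemmas \ref{lem:linear_IFE_coef_one_piece_bnd_by_another} and \ref{lem:bilinear_IFE_coef_one_piece_bnd_by_another} (and Remark \ref{rem:linear_IFE_coef_one_piece_bnd_by_another}) move estimates from the fat subelement to the thin one, and the gradient lower bound of Lemma \ref{lem:bilinear_IFE_gradient_lower_bnd} underlies the IFE trace inequality in the bilinear case. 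The reason $\tH^3$ (rather than merely $\tH^2$) regularity is assumed is that controlling the discrepancy between the curved interface $\Gamma$ and the chord $\overline{DE}$, and ensuring that the global norm enters only at order $h^2$ while the local norm enters at order $h$, requires $\nabla u$ to have one more derivative near $\Gamma$.
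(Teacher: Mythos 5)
Your architecture (split $u-I_hu$ through an auxiliary $w\in S_h(K)$, use the classical trace inequality \eqref{eq:trace_inq_2} on the smooth part and the IFE trace inequalities of Section 3 on the discrete part) is reasonable in outline, but there is a genuine gap at its center: the function $w$ is never constructed, and the properties you demand of it are essentially as hard as the lemma itself. You need $w\in S_h(K)$, so that the jump conditions across $\overline{DE}$ hold and \eqref{eq:linear_IFE_trace_ineq_2} or \eqref{eq:blinear_IFE_trace_ineq_2} applies to $w-I_hu$, \emph{and} you need each polynomial piece $w^s$ to approximate the Sobolev extension $\tilde u^s$ to optimal order in $H^1$ \emph{over the whole element} $K$ (this is what your application of \eqref{eq:trace_inq_2} to $\tilde u^s-w|_{K^s}$ on all of $K$ requires), with constants independent of the interface location. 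But $S_h(K)$ is only $d_K$-dimensional and its two pieces are coupled by jump conditions across the chord $\overline{DE}$, while $u^-$ and $u^+$ are coupled across the curved $\Gamma$; reconciling these two constraint sets uniformly in the interface position, in particular when one subelement is a thin sliver, is precisely the central difficulty of IFE approximation theory, and the coefficient-equivalence lemmas of Section 3 by themselves do not deliver it. Likewise, the bound $\norm{\nabla(u-I_hu)}_{L^2(K)}\leq Ch\norm{u}_{\tH^2(K)}$ that you invoke for the second term is not the statement of \eqref{eq:intp_error_bnd}, which is global; the available per-element estimates on interface elements carry $H^3$ or subdomain norms, which is exactly why the lemma's right-hand side contains both $h^2\norm{u}_{\tH^3(\Omega)}^2$ and $h\norm{u}_{\tH^2(K)}^2$.

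The paper avoids constructing any such $w$ by working from the explicit pointwise representation of $(I_hu-u)_p$ on the edge taken from Lemmas 3.3--3.4 of the Li--Lin--Lin--Rogers paper: a leading term measuring the discrepancy $N^-(D)-N_{\overline{DE}}$ between the true normal at $D$ and the chord normal, plus integral remainders $I_1,I_2,I_3$ multiplied by $\partial\phi_i/\partial p$. Each term is then estimated directly on $B$: the leading term and $I_1,I_2$ reduce to one-dimensional restrictions of second derivatives to a line, which is where $\tH^3$ regularity genuinely enters and which produces the $h^2\norm{u}_{\tH^3(\Omega)}^2$ contribution, while $I_3$ is converted by a change of variables into an area integral over $K$, producing the $h\norm{u}_{\tH^2(K)}^2$ contribution. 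To salvage your route you would have to prove existence of $w$ with the stated uniform approximation properties, which would amount to redoing that representation-formula analysis in disguise.
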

\begin{proof}
We give a proof for linear IFEs, and the arguments can be used to establish this error bound for
bilinear IFEs.

Without loss of generality, let $K = \bigtriangleup A_1A_2A_3$ be an interface triangle such that
\begin{align}\label{eq: triangle vertices}
A_1 = (0,h), A_2 = (0,0), A_3 = (h,0)
\end{align}
and assume that the interface points on the edge of $K$ are
\begin{align}\label{eq: triangle DE}
D = (0,d), E = (e,h-e)
\end{align}
with $A_1 \in K^+$. Also we only discuss $B = \overline{A_1A_2}$, the estimate on the other
interface edge can be established similarly.

By Lemma 3.3 and Lemma 3.4 in \cite{ZLi_TLin_YLin_RRogers_linear_IFE}, for every $X \in \overline{DA_2}$, we have
\begin{align}
(I_{h}u(X) - u(X))_p &= \big(N^-(D) - N_{\overline{DE}}\big)\nabla u^-(X)(A_1-D) \pderiv{\phi_1(X)}{p}
\nonumber \\
&  +I_1(X) \pderiv{\phi_1(X)}{p} + I_2(X)\pderiv{\phi_2(X)}{p} + I_3(X) \pderiv{\phi_3(X)}{p},~~p = x, y,
\label{eq:interp_error_bnd_edge_1}
\end{align}
where
\begin{align*}
N^-(D) = \begin{pmatrix}
n_y(D)^2 + \rho n_x(D)^2 & (\rho - 1)n_x(D)n_y(D) \\
(\rho - 1)n_x(D)n_y(D) & n_y(D)^2 + \rho n_x(D)^2
\end{pmatrix},N_{\overline{DE}} = \begin{pmatrix}
{\bar n}_y^2 + \rho {\bar n}_x^2 & (\rho - 1){\bar n}_x {\bar n}_y \\
(\rho - 1){\bar n}_x {\bar n}_y & {\bar n}_y^2 + \rho {\bar n}_x^2
\end{pmatrix}
\end{align*}
$\rho = \beta^- /\beta^+$, ${\bf n}(X) = (n_x(X), n_y(X))^T$ is the normal to $\Gamma$ at $X$,
${\bf n}(\overline{DE}) = ({\bar n}_x, {\bar n}_y)^T$ is the normal of $\overline{DE}$,
and
\begin{align}\label{eq: I1}
  I_1(X) & = (1-t_d)\big(N^-(D) - I)\int_0^1 \oderiv{\nabla u^-}{t}(tD + (1-t)X)\cdot(A_1-X) dt  \nonumber\\
   & ~+ \int_0^{t_d}(1-t) \oderivm{u^-}{t}{2}(tA_1 + (1-t)X) dt + \int_{t_d}^1(1-t) \oderivm{u^+}{t}{2}(tA_1 + (1-t)X) dt,
\end{align}
\begin{align}\label{eq: I2I3}
  I_i(X) &= \int_0^1(1-t) \oderivm{u^-}{t}{2}(tA_2 + (1-t)X) dt, ~~~~~i = 2,3,
\end{align}
where $D = t_dA_1 + (1-t_d)X = X + t_d(A_1-X)$.
By Lemma 3.1 and Theorem 2.4 of \cite{ZLi_TLin_YLin_RRogers_linear_IFE}, we have
\begin{align}
& \int_{\overline{DA_2}}\left(\big(N^-(D) - N_{\overline{DE}}\big)\nabla u^-(X)(A_1-D) \pderiv{\phi_1(X)}{p}\right)^2dX \leq C h^3 \norm{u}_{H^3(\Omega^-)}^2,
\label{eq:interp_error_bnd_edge_2}
\end{align}
for $p=x,y$. By direct calculations we have
\begin{align*}
&\abs{\oderiv{\nabla u^-}{t}(tD + (1-t)X)\cdot(A_1-X)} \leq \big(\abs{u_{xx}^-(tD + (1-t)X)(x_d - x)(x_1-x)} \\
& \quad + \abs{u_{xy}^-(tD + (1-t)X)(y_d - y)(x_1-x)}  + \abs{u_{yx}^-(tD + (1-t)X)(x_d - x)(y_1-y)} \\
& \quad+ \abs{u_{yy}^-(tD + (1-t)X)(y_d-y)(y_1-y)}\big),
\end{align*}
and
\begin{align*}
&\abs{\oderivm{u^s}{t}{2}(tA_i + (1-t)X)} \\
\leq & \big(\abs{u_{xx}^s(tA_i + (1-t)X)(x_i - x)(x_i-x)}
+ \abs{u_{xy}^s(tA_i + (1-t)X)(y_i - y)(x_i-x)} \\
& ~~+ \abs{u_{yx}^s(tA_i + (1-t)X)(x_i-x)(y_i-y)}
+ \abs{u_{yy}^s(tA_i + (1-t)X)(y_i - y)(y_i-y)}\big)
\end{align*}
where $s = \pm, i = 1, 2, 3$. Let $I_{1,i}(X), i = 1, 2, 3$ be three integrals in $I_1(X)$, respectively. Then,
by Theorem 2.4 of \cite{ZLi_TLin_YLin_RRogers_linear_IFE}, we have
\begin{eqnarray}
    \int_{\overline{DA_2}} \left(I_{1,1}(X)\pderiv{\phi_1(X)}{p}\right)^2 dX
   &\leq&  \frac{C}{h^2} \int_0^d (1-t_d)^2 \int_0^1 \abs{u_{yy}^-(0, ty_d + (1-t)y)(y_d - y)(h-y)}^2 dt dy  \nonumber \\
   &\leq&  Ch^2 \int_0^d\abs{u_{yy}^-(0,z)}^2 dz \leq C h^2 \norm{u}_{H^3(\Omega^-)}^2,\label{eq:interp_error_bnd_edge_3}
\end{eqnarray}
\begin{eqnarray}
    \int_{\overline{DA_2}} \left(I_{1,2}(X)\pderiv{\phi_1(X)}{p}\right)^2 dX
   &\leq&  C \int_0^d \int_0^{t_d}\abs{u_{yy}^-(0,y + t(h-y))}^2(h-y)^2(1-t)^2 dt dy  \nonumber \\
   &\leq&  Ch^2 \int_0^d\abs{u_{yy}^-(0,z)}^2 dz \leq C h^2 \norm{u}_{H^3(\Omega^-)}^2,\label{eq:interp_error_bnd_edge_4}
\end{eqnarray}
\begin{eqnarray}
   \int_{\overline{DA_2}} \left(I_{1,3}(X)\pderiv{\phi_1(X)}{p}\right)^2 dX &\leq& C \int_0^d \int_{t_d}^1\abs{u_{yy}^+(0,y + t(h-y))}^2(h-y)^2(1-t)^2 dt dy  \nonumber \\
   &\leq& C h^2 \int_{d}^h\abs{u_{yy}^+(0,z)}^2 dz \leq C h^2 \norm{u}_{H^3(\Omega^+)}^2.\label{eq:interp_error_bnd_edge_5}
\end{eqnarray}
Similarly, we can show that
\begin{eqnarray}
  \int_{\overline{DA_2}} \left(I_2(X)\pderiv{\phi_2(X)}{p}\right)^2 dX
  &\leq& C \int_0^d \int_0^1 \abs{u_{yy}^-(0,(1-t)y)}^2 y^2(1-t)^2 dt dy \nonumber \\
   &\leq& Ch^2 \int_0^d \abs{u_{yy}^-(0,z)}^2dz \leq Ch^2 \norm{u}_{H^3(\Omega^-)}^2. \label{eq:interp_error_bnd_edge_6}
\end{eqnarray}
For the term involving $I_3(X)$, we have
\begin{eqnarray}\label{eq:interp_error_bnd_edge_7}
   && \qquad\qquad\qquad\int_{\overline{DA_2}} \left(I_3(X)\pderiv{\phi_3(X)}{p}\right)^2 dX\\
   &\leq& C \left(h^2\int_0^d \int_0^1 \abs{u_{xx}^-(th,(1-t)y)}^2 (1-t)^2 dt dy
+ \int_0^d \int_0^1 \abs{u_{xy}^-(th,(1-t)y)}^2y^2(1-t)^2 dt dy \right. \nonumber \\
   &+&  \left.\int_0^d \int_0^1 \abs{u_{yx}^-(th,(1-t)y)}^2y^2(1-t)^2 dt dy + \int_0^d \int_0^1 \abs{u_{yy}^-(th,(1-t)y)}^2 y^2(1-t)^2 dt dy \right).\nonumber
\end{eqnarray}
Let $th = p, (1-t)y = q$, then we have
\begin{align*}
t = \frac{p}{h}, y = \frac{q}{1-t} = \frac{q}{1- p/h},
\end{align*}
and,
\begin{align*}
&\frac{q^2}{h-p} = \frac{(1-t)^2y^2}{h - th} = (1-t)y\frac{y}{h}, \abs{(1-t)y} \leq h,  \frac{y}{h} \leq 1.
\end{align*}
Hence,
\begin{align*}
&h^2\int_0^d \int_0^1 \abs{u_{xx}^-(th,(1-t)y)}^2 (1-t)^2 dt dy = \iint \limits_{\bigtriangleup_{DA_2A_3}} \abs{u_{xx}^-(p,q)}^2 \frac{h^2(1-p/h)^2}{h-p} dp dq \\
& \leq \iint \limits_{\bigtriangleup_{DA_2A_3}} \abs{u_{xx}^-(p,q)}^2 (h-p) dp dq \leq C h \norm{u}_{\tilde H^2(K)}^2, \\
&\int_0^d \int_0^1 \abs{u_{xy}^-(th,(1-t)y)}^2 y^2(1-t)^2 dt dy = \iint \limits_{\bigtriangleup_{DA_2A_3}} \abs{u_{xy}^-(p,q)}^2 \frac{q^2}{h-p} dp dq
\leq C h \norm{u}_{\tilde H^2(K)}^2, \\
&\int_0^d \int_0^1 \abs{u_{yy}^-(th,(1-t)y)}^2 y^2(1-t)^2 dt dy =\iint \limits_{\bigtriangleup_{DA_2A_3}} \abs{u_{yy}^-(p,q)}^2 \frac{q^2}{h-p} dp dq
\leq C h \norm{u}_{\tilde H^2(K)}^2.
\end{align*}
Using these estimates in \eqref{eq:interp_error_bnd_edge_6}, we have
\begin{align}
&\int_{\overline{DA_2}} \left(I_3(X)\pderiv{\phi_3(X)}{p}\right)^2 dX \leq C h \norm{u}_{\tilde H^2(K)}^2.
\label{eq:interp_error_bnd_edge_8}
\end{align}
Finally, the inequality \eqref{eq:interp_error_bnd_edge} follows from putting estimates
\eqref{eq:interp_error_bnd_edge_2}-\eqref{eq:interp_error_bnd_edge_6}, \eqref{eq:interp_error_bnd_edge_7}
into \eqref{eq:interp_error_bnd_edge_1}.
\end{proof}

\begin{remark}\label{lem:interp_error_bnd_edge_inf}
For every
$u \in \tW^{2,\infty}(\Omega)$ satisfying the interface jump conditions \eqref{eq:bvp_int_1} and \eqref{eq:bvp_int_2},
we can also show that there exists a constant $C$ independent of the interface such that the interpolation $I_hu$ in the IFE space
$S_h(\Omega)$ fulfills
\begin{align}\label{eq:interp_error_bnd_edge_inf}
\norm{\beta (\nabla(u-I_hu))|_{K} \cdot \bfn_B}_{L^2(B)}^2 \leq Ch^3\norm{u}_{\tilde W^{2,\infty}(\Omega)}^2,
\end{align}
where $K$ is an interface element and $B$ is one of its interface edge. A proof for
\eqref{eq:interp_error_bnd_edge_inf} is given in Appendix \ref{remark_1} which uses arguments similar to
those for proving Lemma \ref{lem:interp_error_bnd_edge}.
\end{remark}

Now, we are ready to derive the error bound for IFE solutions generated by the partially penalized IFE method
\eqref{eq:IFE_eq}.

\begin{theorem}\label{th:error_bnd_H1}
Assume that the exact solution $u$ to the interface problem \eqref{bvp_pde}-\eqref{eq:bvp_int_2} is
in $\tH^3(\Omega)$ and $u_h$ is its IFE solution generated with
$\alpha = 1$ on a Cartesian (either triangular or rectangular) mesh $\mathcal{T}_h$. Then there exists
a constant $C$ such that
\begin{align}
\norm{u - u_h}_h \leq C h \norm{u}_{\tilde H^3(\Omega)}. \label{eq:error_bnd_H1}
\end{align}
\end{theorem}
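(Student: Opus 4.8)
The plan is to run a Céa-type argument built on the coercivity of Lemma~\ref{lem:coercivity}, an exact Galerkin orthogonality, and the interpolation estimates \eqref{eq:intp_error_bnd} and Lemma~\ref{lem:interp_error_bnd_edge}. First I would record consistency: the exact solution $u\in\tH^3(\Omega)$ is globally continuous, so $[u]_B=0$ on every edge, and hence the weak form \eqref{eq:weak_form} satisfied by $u$ for all $v\in V_h$ reads precisely $a_h(v_h,u)=(v_h,f)$ once we restrict to $v_h\in S_h(\Omega)\subset V_h$ and compare with \eqref{eq:IFE_BF}. Subtracting \eqref{eq:IFE_eq} gives the orthogonality $a_h(v_h,u-u_h)=0$ for all $v_h\in S_h(\Omega)$.

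Next, write $\eta=u-I_hu$ and $e_h=u_h-I_hu\in S_h(\Omega)$. By Lemma~\ref{lem:coercivity} with $\alpha=1$ and the orthogonality, $\kappa\norm{e_h}_h^2\le a_h(e_h,e_h)=a_h(e_h,u-I_hu)=a_h(e_h,\eta)$, so it suffices to bound the four pieces of $a_h(e_h,\eta)$ by $Ch\norm{u}_{\tH^3(\Omega)}\norm{e_h}_h$. The volume term $\sum_{K}\int_K\beta\nabla e_h\cdot\nabla\eta\,dX$ is handled by the Cauchy--Schwarz inequality and \eqref{eq:intp_error_bnd}. For the penalty term $\sum_B\int_B\frac{\sigma_B^0}{\abs{B}}[e_h][\eta]\,ds$ and the symmetrization term $\epsilon\sum_B\int_B\{\beta\nabla e_h\cdot\bfn_B\}[\eta]\,ds$, I would peel off, respectively, $\abs{B}^{-1/2}\norm{[e_h]}_{L^2(B)}$ and (via the IFE trace inequalities \eqref{eq:linear_IFE_trace_ineq_2}/\eqref{eq:blinear_IFE_trace_ineq_2}, or \eqref{eq:trace_inq_2} on a non-interface neighbor) the quantities $\norm{\sqrt\beta\nabla e_h}_{L^2(K_{B,i})}$, all of which are controlled by $\norm{e_h}_h$; what then remains is $\sum_B\abs{B}^{-1}\norm{[\eta]}_{L^2(B)}^2$, and applying the standard trace inequality \eqref{eq:trace_inq_1} on each element adjacent to $B$ together with \eqref{eq:intp_error_bnd} and $\abs{B}\sim h$ bounds this by $Ch^2\norm{u}_{\tH^2(\Omega)}^2$. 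Hence these three terms are each $\le Ch\norm{u}_{\tH^2(\Omega)}\norm{e_h}_h$.

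The decisive term is the flux term $-\sum_B\int_B\{\beta\nabla\eta\cdot\bfn_B\}[e_h]\,ds$, where $\eta$ is not $H^2$ on an interface element so \eqref{eq:trace_inq_2} is unavailable. Here Lemma~\ref{lem:interp_error_bnd_edge} is used: each interface element $K$ adjacent to $B$ contributes $\norm{\beta(\nabla\eta)|_K\cdot\bfn_B}_{L^2(B)}^2\le C(h^2\norm{u}_{\tH^3(\Omega)}^2+h\norm{u}_{\tH^2(K)}^2)$, so $\abs{B}\,\norm{\{\beta\nabla\eta\cdot\bfn_B\}}_{L^2(B)}^2\le C(h^3\norm{u}_{\tH^3(\Omega)}^2+h^2\norm{u}_{\tH^2(K)}^2)$; since there are only $O(h^{-1})$ interface edges, summing gives $\sum_B\abs{B}\,\norm{\{\beta\nabla\eta\cdot\bfn_B\}}_{L^2(B)}^2\le Ch^2\norm{u}_{\tH^3(\Omega)}^2$, while $\sum_B\abs{B}^{-1}\norm{[e_h]}_{L^2(B)}^2\le\norm{e_h}_h^2$ straight from the definition of $\norm{\cdot}_h$. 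One more Cauchy--Schwarz bounds the flux term by $Ch\norm{u}_{\tH^3(\Omega)}\norm{e_h}_h$. Collecting the four estimates yields $\norm{e_h}_h\le Ch\norm{u}_{\tH^3(\Omega)}$, and the triangle inequality $\norm{u-u_h}_h\le\norm{u-I_hu}_h+\norm{e_h}_h$, combined with $\norm{u-I_hu}_h\le Ch\norm{u}_{\tH^2(\Omega)}$ (from \eqref{eq:intp_error_bnd} and the same trace bound for the jump part used above), gives \eqref{eq:error_bnd_H1}. I expect the flux term to be the main obstacle; it is also the reason the hypothesis is $\tH^3$ rather than $\tH^2$, since the non-localizable piece $h^2\norm{u}_{\tH^3(\Omega)}^2$ of the edge interpolation bound only sums acceptably thanks to there being only $O(h^{-1})$ interface edges.
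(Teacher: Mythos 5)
Your proposal is correct and follows essentially the same route as the paper: Galerkin orthogonality plus the coercivity of Lemma \ref{lem:coercivity}, a four-term splitting of $a_h(e_h,\eta)$ handled by the interpolation bound \eqref{eq:intp_error_bnd}, the standard trace inequality \eqref{eq:trace_inq_1} for the jump of $\eta$, the IFE trace inequalities for the flux of $e_h$, and Lemma \ref{lem:interp_error_bnd_edge} together with the $O(h^{-1})$ count of interface edges for the decisive flux term. The only cosmetic difference is that you fix $w_h=I_hu$ from the outset, whereas the paper carries a general $w_h$ and specializes at the end.
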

\begin{proof}
From the weak form \eqref{eq:weak_form} and the IFE equation \eqref{eq:IFE_eq} we have
\begin{align}
&a_h(v_h, u_h - w_h) = a_h(v_h, u-w_h), ~~\forall v_h, w_h \in S_h(\Omega). \label{eq:error_bnd_H1_1}
\end{align}
Letting $v_h = u_h - w_h$ in \eqref{eq:error_bnd_H1_1} and using the coercivity of $a_h(\cdot, \cdot)$, we have
\begin{eqnarray}
   && \qquad\qquad\kappa \norm{u_h - w_h}_h^2 \leq \abs{a_h(u_h - w_h, u_h - w_h)} = \abs{a_h(u_h - w_h, u-w_h)} \label{eq:error_bnd_H1_2}\\
   && \leq \left|\sum_{K \in \cT_h} \int_K \beta \nabla(u_h - w_h) \cdot \nabla(u - w_h) dX\right| +
\left|\sum_{B \in {\mathring \cE}_h^i} \int_B \left\{\beta \nabla(u-w_h) \cdot \bfn_B\right\}[u_h - w_h] ds \right| \nonumber\\
   && \left| \epsilon \sum_{B \in {\mathring \cE}_h^i} \int_B \left\{\beta \nabla(u_h-w_h) \cdot \bfn_B\right\}[u - w_h] ds\right|
+ \left|\sum_{B \in {\mathring \cE}_h^i}\int_B \frac{\sigma_B^0}{\abs{B}^\alpha} [u_h-w_h][u-w_h] ds \right|.
\nonumber
\end{eqnarray}
%
We denote the four terms on the right of \eqref{eq:error_bnd_H1_2} by $Q_i, i = 1, 2, 3, 4$. Then,
\begin{eqnarray}
  Q_1 &\leq& \left(\sum_{K \in \cT_h} \norm{\beta^{1/2} \nabla(u - w_h)}_{L^2(K)}^2\right)^{1/2}\left(\sum_{K \in \cT_h} \norm{\beta^{1/2} \nabla(u_h - w_h)}_{L^2(K)}^2\right)^{1/2} \nonumber \\
   &\leq& \frac{3}{2\kappa}\max(\beta^-, \beta^+) \norm{\nabla(u-w_h)}_{L^2(\Omega)}^2 + \frac{\kappa}{6}\sum_{K \in \cT_h} \norm{\beta^{1/2} \nabla(u_h - w_h)}_{L^2(K)}^2 \nonumber \\
   &\leq& C\norm{\nabla(u-w_h)}_{L^2(\Omega)}^2 + \frac{\kappa}{6}\norm{u_h - w_h}_h^2. \label{eq:error_bnd_H1_Q1}
\end{eqnarray}
\begin{eqnarray}
  Q_2 &\leq& \frac{\kappa}{6}\sum_{B \in {\mathring \cE}_h^i}\frac{\sigma_B^0}{\abs{B}^\alpha}\norm{[u_h - w_h]}_{L^2(B)}^2 + C\sum_{B \in {\mathring \cE}_h^i}\frac{\abs{B}^\alpha}{\sigma_B^0}\norm{\left\{\beta \nabla(u-w_h) \cdot \bfn_B\right\}}_{L^2(B)}^2 \nonumber \\
   &\leq& \frac{\kappa}{6}\norm{u_h - w_h}_h^2 + C\sum_{B \in {\mathring \cE}_h^i}\frac{\abs{B}^\alpha}{\sigma_B^0}\norm{\left\{\beta \nabla(u-w_h) \cdot \bfn_B\right\}}_{L^2(B)}^2. \label{eq:error_bnd_H1_Q2}
\end{eqnarray}
To bound $Q_3$, for each $B \in {\mathring \cE}_h^i
$, we let $K_{B,i} \in \cT_h, i = 1, 2$ be such that $B = K_{B,1}\cap K_{B,2}$. First,
by the standard trace inequality on elements for $H^1$ functions, we have
\begin{align*}
\norm{[u - w_h]}_{L^2(B)} &\leq \norm{(u-w_h)|_{K_{B,1}}}_{L^2(B)} + \norm{(u-w_h)|_{K_{B,2}}}_{L^2(B)} \\
&\leq Ch^{-1/2}\left(\norm{u - w_h}_{L^2(K_{B,1})} + h\norm{\nabla(u - w_h)}_{L^2(K_{B,1})}\right) \\
&\hspace{0.2in} + Ch^{-1/2}\left(\norm{u - w_h}_{L^2(K_{B,2})} + h\norm{\nabla(u - w_h)}_{L^2(K_{B,2})}\right). 
\end{align*}
Then, applying the trace inequalities established in
Lemma \ref{lem:linear_IFE_trace_ineq} or Lemma \ref{lem:blinear_IFE_trace_ineq} depending on
whether linear IFEs or bilinear IFE are considered, we have
\begin{eqnarray*}
   && \norm{\left\{\beta \nabla(u_h-w_h) \cdot \bfn_B\right\}}_{L^2(B)} \\
   &\leq& \frac{C}{2}h^{-1/2}\left(\norm{\sqrt{\beta} \nabla (u_h-w_h)}_{L^2(K_{B,1})} + \norm{\sqrt{\beta} \nabla (u_h-w_h)}_{L^2(K_{B,2})}\right).
\end{eqnarray*}
%
Hence
\begin{align}
Q_3 &\leq \abs{\epsilon} \sum_{B \in {\mathring \cE}_h^i} \norm{\left\{\beta \nabla(u_h-w_h) \cdot \bfn_B\right\}}_{L^2(B)} \norm{[u - w_h]}_{L^2(B)} \nonumber \\
&\leq Ch^{-2}\left(\norm{u - w_h}_{L^2(\Omega)}^2 + h^2\norm{\nabla(u - w_h)}_{L^2(\Omega)}^2  \right)
+ \frac{\kappa}{6}\norm{u_h-w_h}_h^2.
\label{eq:error_bnd_H1_Q3}
\end{align}
To bound $Q_4$, by the standard trace inequality, we have
\begin{align}
&\int_B \frac{\sigma_B^0}{\abs{B}^\alpha} [u-w_h][u-w_h] ds= \frac{\sigma_B^0}{\abs{B}^\alpha}\norm{[u-w_h]}_{L^2(B)}^2 \nonumber \\
&\leq  \frac{\sigma_B^0}{\abs{B}^\alpha} \left(\norm{(u-w_h)|_{K_{B,1}}}_{L^2(B)} + \norm{(u-w_h)|_{K_{B,2}}}_{L^2(B)}\right)^2 \nonumber \\
&\leq \frac{\sigma_B^0}{\abs{B}^\alpha}C\abs{B}\abs{K_{B,1}}^{-1}\big(\norm{u-w_h}_{L^2(K_{B,1})} +
h \norm{\nabla (u -w_h)}_{L^2(K_{B,1})}\big)^2 \nonumber \\
&\hspace{0.1in} + \frac{\sigma_B^0}{\abs{B}^\alpha}C\abs{B}\abs{K_{B,2}}^{-1}\big(\norm{u-w_h}_{L^2(K_{B,2})} +h \norm{\nabla (u -w_h)}_{L^2(K_{B,2})}\big)^2  \nonumber \\
& \leq Ch^{-(\alpha+1)}\big(\norm{u-w_h}_{L^2(K_{B,1})} +
h \norm{\nabla (u -w_h)}_{L^2(K_{B,1})}\big)^2 \nonumber \\
&\hspace{0.1in} + Ch^{-(\alpha+1)}\big(\norm{u-w_h}_{L^2(K_{B,2})} +h \norm{\nabla (u -w_h)}_{L^2(K_{B,2})}\big)^2, \label{eq:error_bnd_H1_7}
\end{align}
where we have used the facts that
\begin{align*}
&\abs{B} = h \text{~~or~~} \abs{B} = \sqrt{2}h, ~~\abs{K_{B,i}} = \Frac{h^2}{2}, \text{~~or~~} \abs{K_{B,i}} = h^2, i = 1, 2.
\end{align*}
Then
\begin{align}
Q_4 &\leq \sum_{B \in {\mathring \cE}_h^i}\left(\frac{\kappa}{6} \int_B \frac{\sigma_B^0}{\abs{B}^\alpha} [u_h-w_h][u_h-w_h] ds
+ \frac{3}{2\kappa} \int_B \frac{\sigma_B^0}{\abs{B}^\alpha} [u-w_h][u-w_h] ds \right) \nonumber \\
&\leq \frac{\kappa}{6}\norm{u_h-w_h}_h^2 + \frac{3}{2\kappa}\sum_{B \in {\mathring \cE}_h^i}\int_B \frac{\sigma_B^0}{\abs{B}^\alpha} [u-w_h][u-w_h] ds \nonumber \\
&\leq \frac{\kappa}{6}\norm{u_h-w_h}_h^2 + Ch^{-(\alpha+1)}\big(\norm{u-w_h}_{L^2(\Omega)}^2 +
h^2 \norm{\nabla (u -w_h)}_{L^2(\Omega)}^2\big). \label{eq:error_bnd_H1_Q4}
\end{align}
Then, we put all these bounds for $Q_i, i = 1, 2, 3, 4$ in \eqref{eq:error_bnd_H1_2} to have
\begin{eqnarray*}
  \norm{u_h - w_h}_h^2 &\leq& C\norm{\nabla(u-w_h)}_{L^2(\Omega)}^2 +  C\sum_{B \in {\mathring \cE}_h^i}\frac{\abs{B}^\alpha}{\sigma_B^0}\norm{\left\{\beta \nabla(u-w_h) \cdot \bfn_B\right\}}_{L^2(B)}^2 \\
   && +~ Ch^{-2}\left(\norm{u - w_h}_{L^2(\Omega)}^2 + h^2\norm{\nabla(u - w_h)}_{L^2(\Omega)}^2  \right)\\
   && +~  Ch^{-(\alpha+1)}\big(\norm{u-w_h}_{L^2(\Omega)}^2 +
h^2 \norm{\nabla (u -w_h)}_{L^2(\Omega)}^2\big).
\end{eqnarray*}
Hence, we let $w_h = I_hu$ in the above and using the optimal approximation capability of linear and bilinear
IFE spaces \eqref{eq:intp_error_bnd}
to have
\begin{align}
&\norm{u_h - I_hu}_h^2 \leq C\big(h^2+h^{4-(\alpha+1)}\big)\norm{u}_{\tilde H^2(\Omega)}^2 
+C\sum_{B \in {\mathring \cE}_h^i}\frac{\abs{B}^\alpha}{\sigma_B^0}\norm{\left\{\beta \nabla(u-I_hu) \cdot \bfn_B\right\}}_{L^2(B)}^2. \label{eq:error_bnd_H1_3}
\end{align}
For the second term on the right in \eqref{eq:error_bnd_H1_3}, we use
Lemma \ref{lem:interp_error_bnd_edge} to bound it:
\begin{align}
&\sum_{B \in {\mathring \cE}_h^i}\frac{\abs{B}^\alpha}{\sigma_B^0}\norm{\left\{\beta \nabla(u-I_hu) \cdot \bfn_B\right\}}_{L^2(B)}^2 \nonumber \\
\leq & \sum_{B \in {\mathring \cE}_h^i}\frac{\abs{B}^\alpha}{\sigma_B^0}
C\big(h^2 \norm{u}_{3,\Omega}^2 + h\norm{u}_{2, K_{B,1}} + h\norm{u}_{2, K_{B,1}}\big)
\leq Ch^{1+\alpha}\norm{u}_{\tilde H^3(\Omega)}^2.
\label{eq:error_bnd_H1_4}
\end{align}
Here we have used the fact that the number of interface elements is of $O(h^{-1})$. Hence, for
$\alpha = 1$, we can combine \eqref{eq:error_bnd_H1_3} and \eqref{eq:error_bnd_H1_4} to have
\begin{align}
&\norm{u_h - I_hu}_h \leq Ch\norm{u}_{\tilde H^3(\Omega)}.
\label{eq:error_bnd_H1_5}
\end{align}
In addition, using the optimal approximation capability of linear and
bilinear IFE spaces \eqref{eq:intp_error_bnd}
and \eqref{eq:error_bnd_H1_7}, we can show that
\begin{align}
\norm{u - I_hu}_h\leq Ch\norm{u}_{\tilde H^2(\Omega)}.
\label{eq:error_bnd_H1_6}
\end{align}
Finally, the error estimate \eqref{eq:error_bnd_H1} follows from applying
\eqref{eq:error_bnd_H1_5} and \eqref{eq:error_bnd_H1_6} to the following standard inequality:
\begin{equation*}
\norm{u - u_h}_h \leq \norm{u-I_hu}_h + \norm{u_h - I_hu}_h.
\end{equation*}
\end{proof}

\begin{remark}\label{th:error_bnd_H1_inf}
If the exact solution $u$ to the interface problem \eqref{bvp_pde}-\eqref{eq:bvp_int_2} is
in the function space $\tW^{2,\infty}(\Omega)$, then, using Remark \ref{lem:interp_error_bnd_edge_inf} and arguments similar to those for the proof of Theorem \ref{th:error_bnd_H1}, we can show that
the IFE solution $u_h$ generated with
$\alpha = 1$ on a Cartesian (either triangular or rectangular) mesh $\mathcal{T}_h$ has the following
error estimate:
\begin{align}
\norm{u - u_h}_h \leq C\big(h \norm{u}_{\tilde H^2(\Omega)} + h^{3/2}\norm{u}_{\tW^{2,\infty}(\Omega)}\big). \label{eq:error_bnd_H1_inf}
\end{align}
\end{remark}

\section{Numerical Examples}

In this section, we present a couple of numerical examples to demonstrate features of the
partially penalized IFE methods for elliptic interface problems.

For comparison, the interface problem to be solved in the numerical examples is the same as the one in \cite{XHe_TLin_YLin_Bilinear_Approximation}. Specifically, we consider the interface problem \eqref{bvp_pde}-\eqref{eq:bvp_int_2}
except that \eqref{eq:bvp_bc} is replaced by the non-homogeneous boundary condition $u|_{\partial \Omega} = g$. Let the solution domain $\Omega$ be the open rectangle $(-1,1)\times(-1,1)$ and the interface $\Gamma$ be the circle centered at origin point with a radius $r_0 = \pi/6.28$ which separates $\Omega$ into two sub-domains, denoted by $\Omega^-$ and $\Omega^+$, \emph{i.e.},
\begin{equation*}
    \Omega^- = \{(x,y): x^2+y^2 < r_0^2\}, ~~~\text{and}~~~
    \Omega^+ = \{(x,y): x^2+y^2 > r_0^2\}.
\end{equation*}
The exact solution $u$ to the interface problem is chosen as follows
\begin{equation}\label{eq: true solution}
    u(x,y) =
    \left\{
      \begin{array}{ll}
        \frac{r^\alpha}{\beta^-}, & \text{if~} r \leq r_0, \\
        \frac{r^\alpha}{\beta^+} + \left(\frac{1}{\beta^-} - \frac{1}{\beta^+}\right), & \text{otherwise},
      \end{array}
    \right.
\end{equation}
where $\alpha = 5$, and $r = \sqrt{x^2+y^2}$. The functions $f$ and $g$ in this interface problem are consequently
determined by $u$. The Cartesian meshes $\mathcal{T}_h, h>0$ are formed by
partitioning $\Omega$ into $N\times N$ congruent squares of size $h = 2/N$ for a set of values of integer $N$.

To describe our numerical results, we rewrite the bilinear form in the partially penalized IFE methods \eqref{eq:IFE_eq} as
follows
\begin{eqnarray}\label{eq: bilinear form}
    a_{h} (u_h, v_h) &=& \sum_{T\in \mathcal{T}_h} \int_T\beta \nabla u_h \cdot \nabla v_h dX + \delta
    \sum_{B \in {\mathring \cE}_h^i} \int_B \{\beta\nabla u_h\cdot\mathbf{n}_B\}[v_h]ds  \nonumber \\
    &&~~~~ + \epsilon
    \sum_{B \in {\mathring \cE}_h^i} \int_B \{\beta\nabla v_h\cdot\mathbf{n}_B\}[u_h]ds  +
\sum_{B \in {\mathring \cE}_h^i} \frac{\sigma_B^0}{|B|}\int_{B}[u_h][v_h]ds.
\end{eqnarray}
When $\delta = 0$, $\epsilon = 0$, and $\sigma^0_B = 0$ for all $B\in {\mathring \cE}_h^i$, the partially penalized bilinear IFE method becomes the \emph{classic} bilinear IFE method proposed in \cite{XHe_Thesis_Bilinear_IFE, XHe_TLin_YLin_Bilinear_Approximation}. When $\delta = -1$, and $\sigma^0_B > 0$, we call the partially penalized IFE methods corresponding to $\epsilon = -1, 0, 1$, respectively, the symmetric, incomplete, and nonsymmetric PPIFE methods because of their similarity to the corresponding DG methods \cite{Riviere_DG_book}.
In our numerical experiment, we use the parameter $\alpha = 1$ as suggested by
our error estimation. Also, we use $\sigma_B^0 = 10\max(\beta^-,\beta^+)$ in the SPPIFE and IPPIFE schemes and $\sigma_B^0 = 1$ in the NPPIFE scheme.

In the first example, we test these IFE methods with the above interface problem whose diffusion coefficient has a large jump $(\beta^-,\beta^+) = (1,10000)$. Because of the large value of $\beta^+$, the exact solution $u(x,y)$ varies little in $\Omega^+$ which might be one of the reasons
this example is not overly difficult for all the IFE methods, and they indeed perform comparably well, see Tables \ref{table: IFE solution H1 error 1 10000}, \ref{table: IFE solution L2 error 1 10000}, and \ref{table: IFE solution infinity error 1 10000}. Specifically, all the partially penalized IFE methods converge optimally in the $H^1$-norm as predicted by the
error analysis in the previous section. The classic IFE method also converges optimally in the $H^1$-norm even though the related error bound has not been rigorously established yet. The data in Table \ref{table: IFE solution L2 error 1 10000} demonstrate that all the IFE methods converge in the $L^2$-norm at the expected optimal rate. Even though they all seem to converge in the $L^\infty$-norm, the data
in Table \ref{table: IFE solution infinity error 1 10000} do not reveal any definite rates for them.

\begin{table}[th]
\caption{Comparison of $|u_h - u|_{H^1(\Omega)}$ for different IFE methods with $\beta^- = 1$, $\beta^+ = 10000$.}
\vspace{-5mm}
\begin{center}
\begin{footnotesize}
\begin{tabular}{|c|cc|cc|cc|cc|}
\hline
& \multicolumn{2}{c|}{Classic IFE}
& \multicolumn{2}{c|} {SPP IFE}
& \multicolumn{2}{c|} {IPP IFE}
& \multicolumn{2}{c|} {NPP IFE}\\
\hline
$N$
& $|\cdot|_{H^1}$ & rate
& $|\cdot|_{H^1}$ & rate
& $|\cdot|_{H^1}$ & rate
& $|\cdot|_{H^1}$ & rate  \\
 \hline
$20$      &$1.9495E{-2}$ &         &$1.9538E{-2}$ &         &$1.9538E{-2}$ &         &$1.9482E{-2}$ &           \\
$40$      &$1.0539E{-2}$ & 0.8873  &$1.0601E{-2}$ & 0.8821  &$1.0602E{-2}$ & 0.8820  &$1.0562E{-2}$ & 0.8832    \\
$80$      &$5.4219E{-3}$ & 0.9590  &$5.5838E{-3}$ & 0.9249  &$5.5844E{-3}$ & 0.9248  &$5.4945E{-3}$ & 0.9428    \\
$160$     &$2.7157E{-3}$ & 0.9975  &$2.7720E{-3}$ & 1.0103  &$2.7724E{-3}$ & 1.0103  &$2.7291E{-3}$ & 1.0096    \\
$320$     &$1.3535E{-3}$ & 1.0046  &$1.3887E{-3}$ & 0.9973  &$1.3890E{-3}$ & 0.9970  &$1.3613E{-3}$ & 1.0035    \\
$640$     &$6.7876E{-4}$ & 0.9957  &$6.9915E{-4}$ & 0.9900  &$6.9923E{-4}$ & 0.9902  &$6.7915E{-4}$ & 1.0032    \\
$1280$    &$3.4023E{-4}$ & 0.9964  &$3.4439E{-4}$ & 1.0216  &$3.4447E{-4}$ & 1.0214  &$3.3979E{-4}$ & 0.9991    \\
$2560$    &$1.7079E{-4}$ & 0.9942  &$1.7036E{-4}$ & 1.0155  &$1.7038E{-4}$ & 1.0156  &$1.6994E{-4}$ & 0.9996    \\
\hline
\end{tabular}
\end{footnotesize}
\end{center}
\label{table: IFE solution H1 error 1 10000}
\end{table}

\begin{table}[th]
\caption{Comparison of $\|u_h - u\|_{L^2(\Omega)}$ for different IFE methods with $\beta^- = 1$, $\beta^+ = 10000$.}
\vspace{-5mm}
\begin{center}
\begin{footnotesize}
\begin{tabular}{|c|cc|cc|cc|cc|}
\hline
& \multicolumn{2}{c|}{Classic IFE}
& \multicolumn{2}{c|} {SPP IFE}
& \multicolumn{2}{c|} {IPP IFE}
& \multicolumn{2}{c|} {NPP IFE}\\
\hline
$N$
& $\|\cdot\|_{L^2}$ & rate
& $\|\cdot\|_{L^2}$ & rate
& $\|\cdot\|_{L^2}$ & rate
& $\|\cdot\|_{L^2}$ & rate  \\
 \hline
$20$      &$1.1175E{-3}$ &         &$1.1273E{-3}$ &         &$1.1276E{-3}$ &         &$1.1179E{-3}$ &        \\
$40$      &$2.8572E{-4}$ & 1.9676  &$2.9171E{-4}$ & 1.9503  &$2.9181E{-4}$ & 1.9501  &$2.8567E{-4}$ & 1.9683 \\
$80$      &$7.5990E{-5}$ & 1.9107  &$8.5150E{-5}$ & 1.7764  &$8.5223E{-5}$ & 1.7757  &$7.6342E{-5}$ & 1.9038 \\
$160$     &$1.8116E{-5}$ & 2.0685  &$2.2589E{-5}$ & 1.9144  &$2.2645E{-5}$ & 1.9120  &$1.8098E{-5}$ & 2.0766 \\
$320$     &$4.4753E{-6}$ & 2.0172  &$6.1332E{-6}$ & 1.8809  &$6.1629E{-6}$ & 1.8775  &$4.5193E{-6}$ & 2.0017 \\
$640$     &$1.0969E{-6}$ & 2.0286  &$1.6502E{-6}$ & 1.8940  &$1.6502E{-6}$ & 1.9010  &$1.1235E{-6}$ & 2.0081 \\
$1280$    &$2.6689E{-7}$ & 2.0391  &$3.7104E{-7}$ & 2.1530  &$3.7275E{-7}$ & 2.1464  &$2.7680E{-7}$ & 2.0211 \\
$2560$    &$6.3940E{-8}$ & 2.0615  &$7.3251E{-8}$ & 2.3407  &$7.3657E{-8}$ & 2.3393  &$6.8809E{-8}$ & 2.0082 \\
\hline
\end{tabular}
\end{footnotesize}
\end{center}
\label{table: IFE solution L2 error 1 10000}
\end{table}

\begin{table}[h]
\caption{Comparison of $\|u_h - u\|_{L^\infty(\Omega)}$ for different IFE methods with $\beta^- = 1$, $\beta^+ = 10000$.}
\vspace{-5mm}
\begin{center}
\begin{footnotesize}
\begin{tabular}{|c|cc|cc|cc|cc|}
\hline
& \multicolumn{2}{c|}{Classic IFE}
& \multicolumn{2}{c|} {SPP IFE}
& \multicolumn{2}{c|} {IPP IFE}
& \multicolumn{2}{c|} {NPP IFE}\\
\hline
$N$
& $\|\cdot\|_{L^\infty}$ & rate
& $\|\cdot\|_{L^\infty}$ & rate
& $\|\cdot\|_{L^\infty}$ & rate
& $\|\cdot\|_{L^\infty}$ & rate  \\
 \hline
$20$      &$8.8830E{-4}$ &         &$9.9803E{-4}$ &         &$9.9820E{-4}$ &         &$8.4663E{-4}$ &        \\
$40$      &$4.3525E{-4}$ & 1.0292  &$5.2256E{-4}$ & 0.9335  &$5.2185E{-4}$ & 0.9357  &$4.2059E{-4}$ & 1.0093 \\
$80$      &$1.6536E{-4}$ & 1.3962  &$2.2692E{-4}$ & 1.2034  &$2.2745E{-4}$ & 1.1981  &$1.6816E{-4}$ & 1.3225 \\
$160$     &$7.4603E{-5}$ & 1.1483  &$9.8441E{-5}$ & 1.2049  &$9.8619E{-5}$ & 1.2056  &$7.2067E{-5}$ & 1.2225 \\
$320$     &$1.2972E{-5}$ & 2.5238  &$3.6045E{-5}$ & 1.4494  &$3.6421E{-5}$ & 1.4371  &$1.4362E{-5}$ & 2.3271 \\
$640$     &$6.1332E{-6}$ & 1.0807  &$1.7607E{-5}$ & 1.0337  &$1.7531E{-5}$ & 1.0548  &$6.4702E{-5}$ & 1.1504 \\
$1280$    &$2.5136E{-6}$ & 1.2869  &$5.1961E{-6}$ & 1.7606  &$5.1799E{-6}$ & 1.7589  &$1.6993E{-6}$ & 1.9289 \\
$2560$    &$1.2810E{-6}$ & 0.9725  &$1.0242E{-6}$ & 2.3430  &$1.0274E{-6}$ & 2.3339  &$9.1534E{-7}$ & 0.8926 \\
\hline
\end{tabular}
\end{footnotesize}
\end{center}
\label{table: IFE solution infinity error 1 10000}
\end{table}

However, we have observed that the partially penalized IFE methods outperform the classic IFE method in many situations. We demonstrate this by numerical results generated by these IFE methods for the interface problem whose diffusion coefficient has a typical moderate jump $(\beta^-,\beta^+) = (1,10)$. IFE solution errors are listed in Tables \ref{table: IFE solution H1 error 1 10}, \ref{table: IFE solution L2 error 1 10}, and \ref{table: IFE solution infinity error 1 10}. From the data in Table \ref{table: IFE solution H1 error 1 10}, we can see that all the partially penalized IFE methods maintain their predicted $O(h)$ convergence rate
in the $H^1$-norm over all the meshes up to the finest one, while the classic IFE method slightly looses its convergence rate in the $H^1$-norm when the mesh becomes very fine. The effects of the penalization are more prominent when the errors are gauged in the $L^2$-norm and $L^\infty$-norm. According to the data
in Table \ref{table: IFE solution L2 error 1 10}, all the partially penalized IFE methods converge at the optimal rate in the $L^2$-norm but the $L^2$-norm rate of
the classic IFE method clearly degenerates when the mesh becomes finer. Similar phenomenon for the $L^\infty$-norm convergence can be observed from those data
in Table \ref{table: IFE solution infinity error 1 10}.

\begin{table}[ht]
\caption{Comparison of $|u_h - u|_{H^1(\Omega)}$ for different IFE methods with $\beta^- = 1$, $\beta^+ = 10$.}
\vspace{-5mm}
\begin{center}
\begin{footnotesize}
\begin{tabular}{|c|cc|cc|cc|cc|}
\hline
& \multicolumn{2}{c|}{Classic IFE}
& \multicolumn{2}{c|} {SPP IFE}
& \multicolumn{2}{c|} {IPP IFE}
& \multicolumn{2}{c|} {NPP IFE}\\
\hline
$N$
& $|\cdot|_{H^1}$ & rate
& $|\cdot|_{H^1}$ & rate
& $|\cdot|_{H^1}$ & rate
& $|\cdot|_{H^1}$ & rate  \\
 \hline
$20$      &$6.4758E{-2}$ &         &$6.4751E{-2}$ &         &$6.4753E{-2}$ &         &$6.4719E{-2}$ &         \\
$40$      &$3.2779E{-2}$ & 0.9823  &$3.2650E{-2}$ & 0.9878  &$3.2651E{-2}$ & 0.9878  &$3.2637E{-2}$ & 0.9877  \\
$80$      &$1.6596E{-2}$ & 0.9723  &$1.6386E{-2}$ & 0.9947  &$1.6386E{-2}$ & 0.9947  &$1.6382E{-2}$ & 0.9944  \\
$160$     &$8.4072E{-3}$ & 0.9811  &$8.2081E{-3}$ & 0.9974  &$8.2082E{-3}$ & 0.9973  &$8.2063E{-3}$ & 0.9973   \\
$320$     &$4.2566E{-3}$ & 0.9819  &$4.1071E{-3}$ & 0.9988  &$4.1071E{-3}$ & 0.9989  &$4.1068E{-3}$ & 0.9987  \\
$640$     &$2.2267E{-3}$ & 0.9348  &$2.0544E{-3}$ & 0.9994  &$2.0545E{-3}$ & 0.9994  &$2.0544E{-3}$ & 0.9994  \\
$1280$    &$1.1795E{-3}$ & 0.9167  &$1.0274E{-3}$ & 0.9997  &$1.0274E{-3}$ & 0.9997  &$1.0274E{-3}$ & 0.9996   \\
$2560$    &$6.6893E{-4}$ & 0.8182  &$5.1377E{-4}$ & 0.9998  &$5.1377E{-4}$ & 0.9998  &$5.1377E{-4}$ & 0.9998   \\
\hline
\end{tabular}
\end{footnotesize}
\end{center}
\label{table: IFE solution H1 error 1 10}
\end{table}

\begin{table}[ht]
\caption{Comparison of $\|u_h - u\|_{L^2(\Omega)}$ for different IFE methods with $\beta^- = 1$, $\beta^+ = 10$.}
\vspace{-5mm}
\begin{center}
\begin{footnotesize}
\begin{tabular}{|c|cc|cc|cc|cc|}
\hline
& \multicolumn{2}{c|}{Classic IFE}
& \multicolumn{2}{c|} {SPP IFE}
& \multicolumn{2}{c|} {IPP IFE}
& \multicolumn{2}{c|} {NPP IFE}\\
\hline
$N$
& $\|\cdot\|_{L^2}$ & rate
& $\|\cdot\|_{L^2}$ & rate
& $\|\cdot\|_{L^2}$ & rate
& $\|\cdot\|_{L^2}$ & rate  \\
\hline
$20$      &$4.3003E{-3}$ &         &$4.2945E{-3}$ &         &$4.2989E{-3}$ &         &$4.2869E{-3}$ &        \\
$40$      &$1.0622E{-3}$ & 2.0174  &$1.0749E{-3}$ & 1.9983  &$1.0745E{-3}$ &  2.0003 &$1.0626E{-3}$ & 2.0124 \\
$80$      &$2.6196E{-4}$ & 2.0196  &$2.6833E{-4}$ & 2.0021  &$2.6797E{-4}$ &  2.0036 &$2.6440E{-4}$ & 2.0067 \\
$160$     &$6.4952E{-5}$ & 2.0119  &$6.7047E{-5}$ & 2.0008  &$6.6872E{-5}$ &  2.0026 &$6.5876E{-5}$ & 2.0049 \\
$320$     &$1.6311E{-5}$ & 1.9935  &$1.6829E{-5}$ & 1.9942  &$1.6794E{-5}$ &  1.9935 &$1.6594E{-5}$ & 1.9891 \\
$640$     &$4.4482E{-6}$ & 1.8746  &$4.2038E{-6}$ & 2.0012  &$4.1934E{-6}$ &  2.0017 &$4.1383E{-6}$ & 2.0035 \\
$1280$    &$1.4445E{-6}$ & 1.6226  &$1.0501E{-6}$ & 2.0012  &$1.0472E{-6}$ &  2.0015 &$1.0336E{-6}$ & 2.0014 \\
$2560$    &$6.7593E{-7}$ & 1.0956  &$2.6254E{-7}$ & 1.9999  &$2.6149E{-7}$ &  2.0017 &$2.5821E{-7}$ & 2.0357 \\
\hline
\end{tabular}
\end{footnotesize}
\end{center}
\label{table: IFE solution L2 error 1 10}
\end{table}

\begin{table}[ht]
\caption{Comparison of $\|u_h - u\|_{L^\infty(\Omega)}$ for different IFE methods with $\beta^- = 1$, $\beta^+ = 10$.}
\vspace{-5mm}
\begin{center}
\begin{footnotesize}
\begin{tabular}{|c|cc|cc|cc|cc|}
\hline
& \multicolumn{2}{c|}{Classic IFE}
& \multicolumn{2}{c|} {SPP IFE}
& \multicolumn{2}{c|} {IPP IFE}
& \multicolumn{2}{c|} {NPP IFE}\\
\hline
$N$
& $\|\cdot\|_{L^\infty}$ & rate
& $\|\cdot\|_{L^\infty}$ & rate
& $\|\cdot\|_{L^\infty}$ & rate
& $\|\cdot\|_{L^\infty}$ & rate  \\
 \hline
$20$      &$1.0969E{-3}$ &         &$1.3680E{-3}$ &         &$1.3785E{-3}$ &         &$1.0082E{-3}$ &        \\
$40$      &$5.4748E{-4}$ & 1.0026  &$3.9775E{-4}$ & 1.7822  &$3.9769E{-4}$ & 1.7934  &$1.9172E{-4}$ & 2.3947 \\
$80$      &$5.0812E{-4}$ & 0.1077  &$1.0601E{-4}$ & 1.9077  &$1.0582E{-4}$ & 1.9100  &$5.4491E{-5}$ & 1.8149 \\
$160$     &$2.2635E{-4}$ & 1.1667  &$3.1598E{-5}$ & 1.7463  &$3.1217E{-5}$ & 1.7612  &$1.4045E{-5}$ & 1.9559 \\
$320$     &$1.2290E{-4}$ & 0.8811  &$7.0324E{-6}$ & 2.1677  &$6.9364E{-6}$ & 2.1701  &$3.5092E{-6}$ & 2.0009 \\
$640$     &$7.0810E{-5}$ & 0.7954  &$1.9288E{-6}$ & 1.8664  &$1.9213E{-6}$ & 1.8521  &$9.1942E{-7}$ & 1.9324 \\
$1280$    &$3.4111E{-5}$ & 1.0537  &$5.0505E{-7}$ & 1.9332  &$4.9869E{-7}$ & 1.9459  &$2.2932E{-7}$ & 2.0034 \\
$2560$    &$1.7815E{-5}$ & 0.9371  &$1.0199E{-7}$ & 2.3080  &$1.0075E{-7}$ & 2.3074  &$5.9784E{-8}$ & 1.9395 \\
\hline
\end{tabular}
\end{footnotesize}
\end{center}
\label{table: IFE solution infinity error 1 10}
\end{table}

It is known that the point-wise accuracy of the classic IFE methods in the literature is usually quite poor around the interface and we suspect the discontinuity of the IFE functions across interface edges is the main cause for this shortcoming. With the penalty to control the discontinuity in IFE functions, a partially penalized IFE method has the potential to produce better point-wise approximations. To demonstrate this, we plot errors of a classic bilinear IFE solution and a NPP IFE solution in Figure \ref{fig: infinity norm error comparison}. The IFE solutions in these plots are generated on the mesh with $80\times 80$ rectangles. From the plot on the left, we can easily see that the classic IFE solution has much larger errors in the vicinity of the interface. The plot on the right shows that the magnitude of the error in the NPP IFE solution is much smaller uniformly over the whole solution domain. This advantage is also observed for other partially penalized IFE solutions.

\begin{figure}[ht]
\begin{center}
  \includegraphics[width=0.45\textwidth]{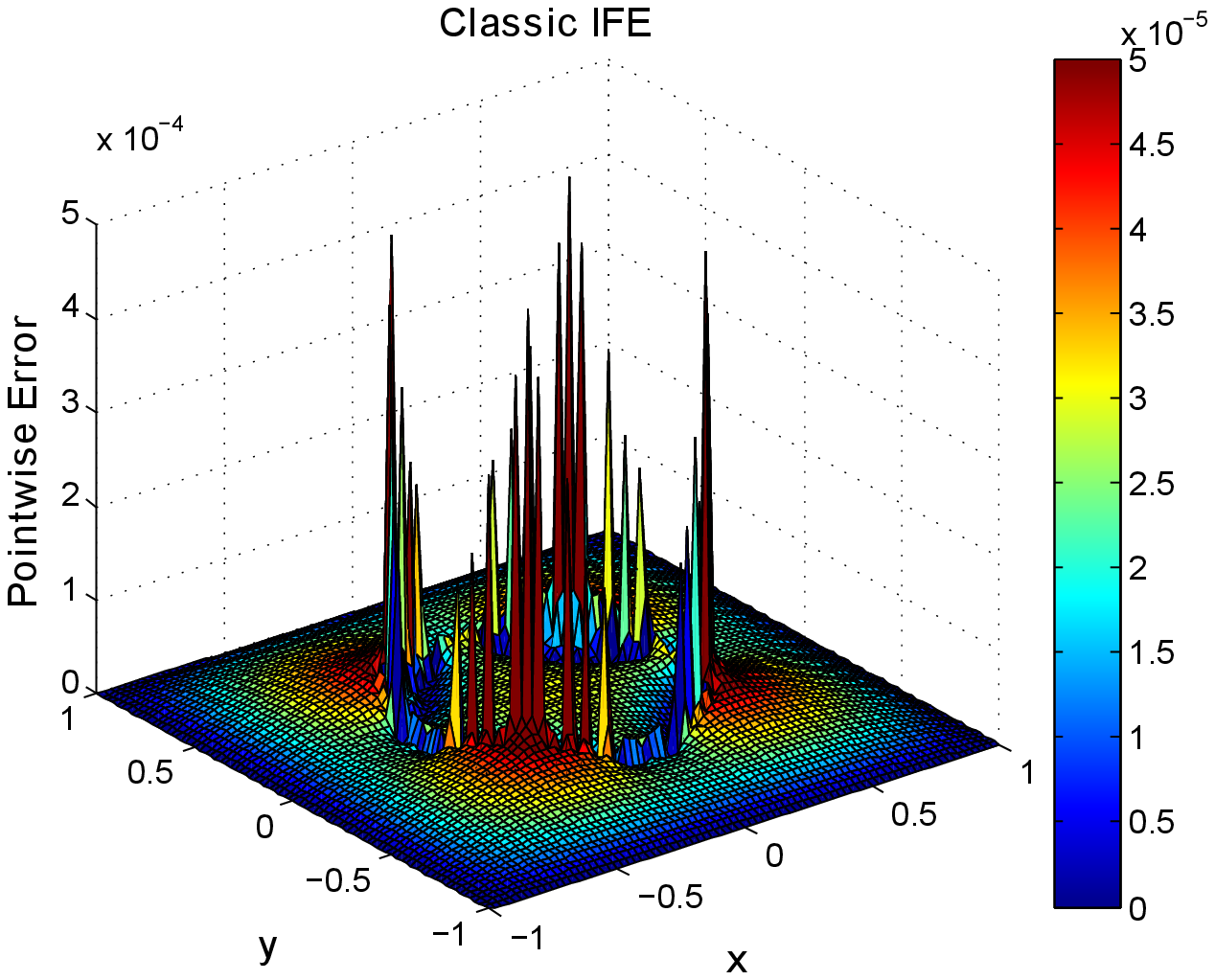}\quad
  \includegraphics[width=0.45\textwidth]{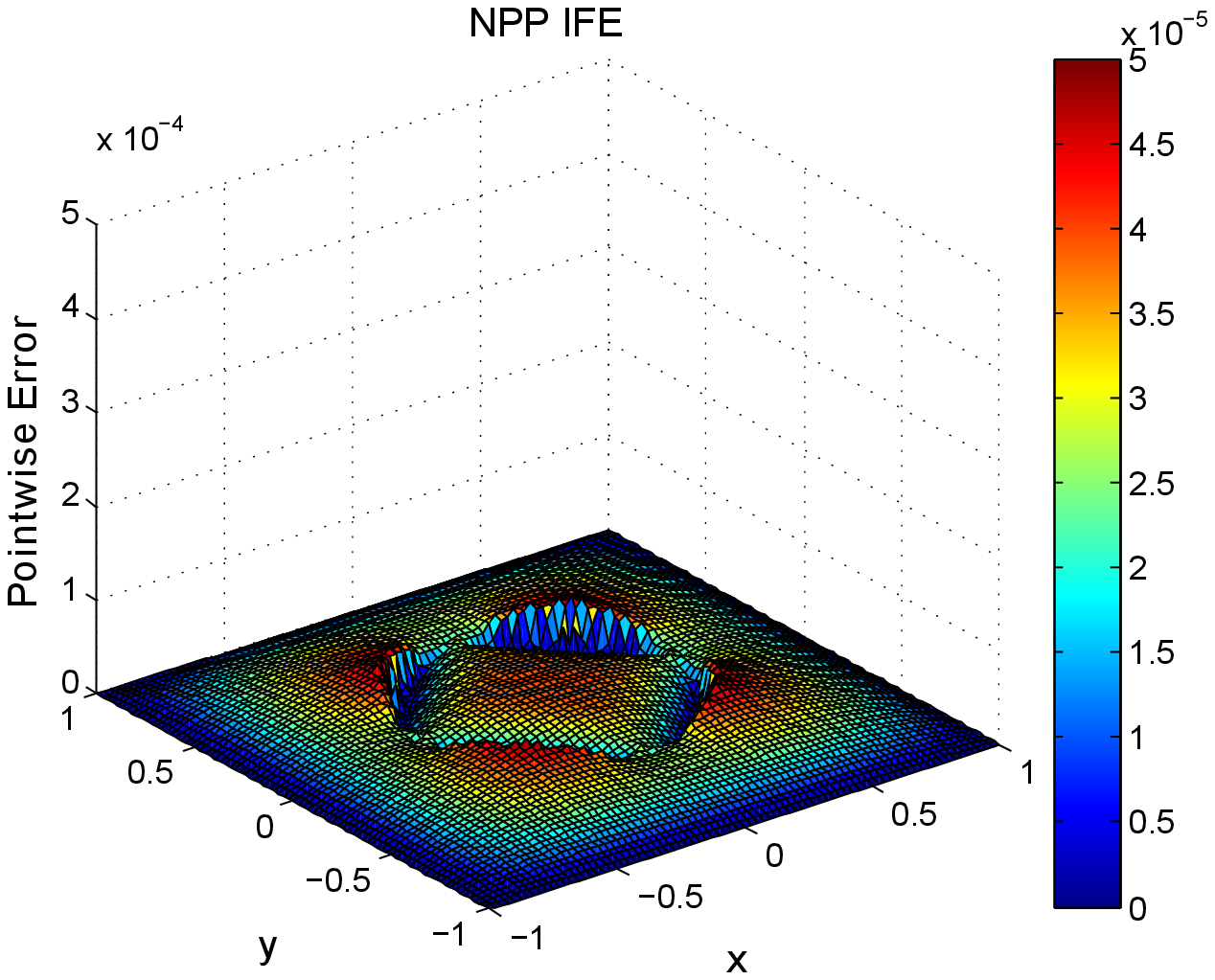}
  \end{center}
\caption{A comparison of point-wise errors of a classic bilinear IFE solution and a nonsymmetric partially penalized IFE solution.
  The coefficient in the interface problem has a moderate jump: $(\beta^-,\beta^+) = (1,10)$.}
  \label{fig: infinity norm error comparison}
\end{figure}

\begin{appendices}

\section{A proof of Remark \ref{lem:interp_error_bnd_edge_inf}}\label{remark_1}

We give a proof for the linear IFEs, and the same arguments can be used to show this error bound for
bilinear IFEs.

Without loss of generality, let $K = \bigtriangleup A_1A_2A_3$ be an interface triangle whose vertices and interface intersection points are given in \eqref{eq: triangle vertices} and \eqref{eq: triangle DE}
with $A_1 \in K^+$. Also we only discuss $B = \overline{A_1A_2}$, the estimate on the other
interface edge can be established similarly.

By Lemma 3.3 and Lemma 3.4 in \cite{ZLi_TLin_YLin_RRogers_linear_IFE}, for every $X \in \overline{DA_2}$, we have
\begin{align}
(I_{h}u(X) - u(X))_p &= \big(N^-(D) - N_{\overline{DE}}\big)\nabla u^-(X)(A_1-D) \pderiv{\phi_1(X)}{p}
\nonumber \\
&  +I_1(X) \pderiv{\phi_1(X)}{p} + I_2(X)\pderiv{\phi_2(X)}{p} + I_3(X) \pderiv{\phi_3(X)}{p},~~p = x, y,
\label{eq:interp_error_bnd_edge_inf_1}
\end{align}
where $N^-(D)$, $N_{\overline{DE}}$, $\rho$,  ${\bf n}(X)$, and ${\bf n}(\overline{DE})$ are defined the same as in the proof of Lemma \ref{lem:interp_error_bnd_edge}.
The quantities $I_i(X)$, $i=1,2,3,$ are given in \eqref{eq: I1} and \eqref{eq: I2I3}.
By Lemma 3.1 and Theorem 2.4 of \cite{ZLi_TLin_YLin_RRogers_linear_IFE}, we have
\begin{align}
& \int_{\overline{DA_2}}\left(\big(N^-(D) - N_{\overline{DE}}\big)\nabla u^-(X)(A_1-D) \pderiv{\phi_1(X)}{p}\right)^2dX \leq C h^3 \norm{u}_{W^{2,\infty}(\Omega^-)}^2
\label{eq:interp_error_bnd_edge_inf_2}
\end{align}
for $p=x,y$. By direct calculations we have for $i = 1, 2, 3, p = x, y$
\begin{align*}
&\abs{\oderiv{\nabla u^-}{t}(tD + (1-t)X)\cdot(A_1-X)} \leq \big(\abs{u_{xx}^-(tD + (1-t)X)}
+ \abs{u_{xy}^-(tD + (1-t)X)} \\
& \hspace{1.7in} + \abs{u_{yx}^-(tD + (1-t)X)} + \abs{u_{yy}^-(tD + (1-t)X)}\big)h^2, \\
&\abs{\oderivm{u^s}{t}{2}(tA_i + (1-t)X)} \leq \big(\abs{u_{xx}^s(tA_i + (1-t)X)}
+ \abs{u_{xy}^s(tA_i + (1-t)X)} \\
& \hspace{1.7in} + \abs{u_{yx}^s(tA_i + (1-t)X)} + \abs{u_{yy}^s(tA_i + (1-t)X)}\big)h^2.
\end{align*}
By these estimates and Theorem 2.4 of \cite{ZLi_TLin_YLin_RRogers_linear_IFE}, we have
\begin{align}
\int_{\overline{DA_2}}\left(I_i(X)\pderiv{\phi_i(X)}{p}\right)^2 dX \leq Ch^3
\norm{u}_{\tW^{2,\infty}(\Omega)}^2, ~~ \label{eq:interp_error_bnd_edge_inf_3}
\end{align}
Then, using \eqref{eq:interp_error_bnd_edge_inf_2} and applying \eqref{eq:interp_error_bnd_edge_inf_2} and \eqref{eq:interp_error_bnd_edge_inf_3} we have
\begin{align}
\int_{\overline{DA_2}}\big((I_{h}u(X) - u(X))_p\big)^2 dX \leq  Ch^3\norm{u}_{\tW^{2,\infty}(\Omega)}^2,
~~p = x, y. \label{eq:interp_error_bnd_edge_inf_4}
\end{align}
Similar arguments can be used to show
\begin{align}
\int_{\overline{A_1D}}\big((I_{h}u(X) - u(X))_p\big)^2 dX \leq  Ch^3\norm{u}_{\tW^{2,\infty}(\Omega)}^2,
~~p = x, y. \label{eq:interp_error_bnd_edge_inf_5}
\end{align}
Finally, the estimate \eqref{eq:interp_error_bnd_edge_inf} on the interface edge
$B = \overline{A_1A_2}$ follows from
\eqref{eq:interp_error_bnd_edge_inf_4} and \eqref{eq:interp_error_bnd_edge_inf_5}.

\end{appendices}
\bibliographystyle{abbrv}

\end{document}